\newcommand{\Pn}{\mathcal{P}_n}
\newtheorem{theorem}{Theorem}
\newtheorem*{theorem*}{Theorem}
\newtheorem{corollary}{Corollary}
\newtheorem{lemma}{Lemma}
\newtheorem*{lemma*}{Lemma}
\newtheorem*{remark*}{Remark}
\let\le\leqslant \let\leq\le
\let\ge\geqslant \let\geq\ge
\DeclareMathOperator{\supp}{supp}
\newcommand{\ds}{\displaystyle}
\newcommand{\mP}{\mathcal{P}_n}
\renewcommand{\Re}{\mathop{\mathrm{Re}}}
\renewcommand{\Im}{\mathop{\mathrm{Im}}}
\renewcommand{\supp}{\mathop{\mathrm{supp}}}
\newcommand{\Jdif}{J_{K \backslash Q}}
\newcommand{\PK}{\mathcal{P}_n(K)}
\newcommand{\w}[1]{\widetilde{#1}}
\newcommand{\NN}{\mathbb{N}}
\newcommand{\CC}{\mathbb{C}}
\newcommand{\RR}{\mathbb{R}}
\definecolor{mag}{rgb}{0.9,0,0.9}
\definecolor{red}{rgb}{0.8,0,0}
\definecolor{green}{rgb}{0,0.8,0}
\definecolor{darkgreen}{rgb}{0,0.6,0}
\def\sz#1{{\color{darkgreen}#1}}
\newcommand{\de}{\delta}   %%%%% A definition by Szilárd
\newcommand{\ff}{\varphi}   %%%%% A definition by Szilárd
\begin{document}

\title{The growth order of the optimal constants \\ in Tur\'{a}n-Er\H {o}d type inequalities in $L^q(K,\mu)$}

\author{P. Yu. Glazyrina, Yu. S. Goryacheva and Sz. Gy. R\'{e}v\'{e}sz}

%%%address
%%%{Polina Yu. Glazyrina \newline  \indent Institute of Mathematics and Computer Sciences, Ural Federal University, \newline  \indent Ekaterinburg, RUSSIA \newline  \indent  and \newline  \indent
%%%Institute of Mathematics and Mechanics, \newline  \indent Ural Branch of the Russian Academy of Sciences, \newline  \indent Ekaterinburg, RUSSIA}
%%%\email{polina.glazyrina@usu.ru}
%%%
%%%\address
%%%{Yulia  S. Goryacheva \newline  \indent Institute of Mathematics and Computer Sciences, Ural Federal University, \newline  \indent Ekaterinburg, RUSSIA \newline  \indent  and \newline  \indent
%%%Institute of Mathematics and Mechanics, \newline  \indent Ural Branch of the Russian Academy of Sciences, \newline  \indent Ekaterinburg, RUSSIA}
%%%\email{yulia.goryacheva@usu.ru}
%%%
%%%\address{Szil\'ard Gy. R\'ev\'esz \newline  \indent A. R\'enyi Institute of Mathematics \newline \indent Budapest, Re\'altanoda utca 13-15. \newline \indent 1053 HUNGARY} \email{revesz@renyi.hu}
%%%

\date{\today}

\maketitle

\begin{abstract}
In 1939 Tur\'{a}n raised the question about lower estimations of the maximum norm of the derivatives of a polynomial $p$ of maximum norm $1$ on the compact set $K$ of the complex plain under the normalization condition that the zeroes of $p$ in question all lie in $K$.
Tur\'{a}n studied the problem for the interval $I=[-1,1]$ and the unit disk $D$ and found that with $n := \deg p$ tending to infinity, the precise growth order of the minimal possible derivative norm (oscillation order) is $\sqrt{n}$ for $I$ and $n$ for $D$.
Er\H{o}d continued the work of Tur\'{a}n considering other domains. Finally, in 2006,
Hal\'{a}sz and R\'{e}v\'{e}sz  proved that the growth of the minimal possible maximal norm of the derivative is of order $n$ for all compact convex domains.

Although Tur\'{a}n himself gave comments about the above oscillation question in $L^q$ norms, till recently results were known only for $D$ and $I$.
Recently, we have found order $n$ lower estimations for several general classes of compact convex domains, and proved that in $L^q$ norm the oscillation order is at least $n/\log n$ for all compact convex domains. In the present paper we prove that the oscillation order is not greater than $n$ for all compact (not necessarily  convex) domains $K$ and $L^q$-norm with respect to any measure supported on more than two points on $K$.
\end{abstract}

{\it Mathematics subject classification (2010):} 41A17, 30E10, 52A10.

{\it Keywords:} polynomial,  Tur\'{a}n’s lower estimate of derivative norm, compact set, positive width,  measure, weight, Lebesgue measure,  area measure.

\section{Introduction %%Statement of the problem and the result
}\label{sec:Introduction}

%%%%%%%%%%%%%%%%%%%%%%%%%%%%%%%%%%%%%%%%%%%%%%%%%%%%%
Let $K$ be a compact set of the complex plane $\CC$ and  $\mu$ be a finite Borel measure on  $K$. For a polynomial $p \in \CC[z]$ and a parameter $0<q<\infty$ we set
$$
\| p \|_q :=\| p \|_{L^q(K,\mu)} :=
\left(\int\limits_{K}|p(z)|^q \mu(dz)\right)^{1/q},
$$
and for $q=\infty$ we will also consider the limiting case\footnote{The last equality follows by continuity of $p(z)$; otherwise one should have taken
$\inf\left\{ \sup_{z\in Q} |p(z)| \colon Q\subset K,\ \mu(K\setminus Q)=0\right\}$.

Note that this definition of the $\infty$-norm is essentially independent of the measure $\mu$ (apart form its support), and hence is different form the usual weighted maximum norm definition $\|p\|_{w,\infty}:=\sup\|pw\|_\infty$.
While in case of an absolutely continuous measure $\mu$ with density function $w$ (with respect to Lebesgue or area measure $\lambda$ restricted to $K$), the weighted $L^q(K,\mu)$ norm matches the weighted norm $\|p\|_{w,q}:=\left( \int_K |p(z)|^q w(z) d\lambda(z)\right)^{1/q}$ for any $0<q<\infty$ (and analogously if the measure $\mu$ is absolutely continuous with respect to the arc length measure $\ell$ of $\partial K$), the limiting case of this relation provides our above definition, and does not always match the $\|\cdot\|_{w,q}$-norm. Therefore, such weighted maximum norms require separate studies. They may, however, be obtained as a limiting case of $L^q$ norms with respect to varying weights $w^q$, as then $\|p\|_{w,\infty}=\lim_{q \to \infty} \|pw\|_q$.
}
$$
\| p \|_\infty :=\| p \|_{L^\infty(K,\mu)}
:=\lim_{q \to \infty} \| p \|_q=\max_{z\in \supp \mu} |p(z)|.
$$
Let $d $ denote  the diameter of $K$ and $ w$ denote the width of $K$,
$$ d:=d(K) := \max_{z, z' \in K}|z - z'| ,  \quad \ds w:=w(K) := \min_{\gamma \in [-\pi, \pi]}
\max_{z,z' \in K}\left(\Re(ze^{i\gamma})-\Re(z'e^{i\gamma})\right) .
$$
The width $w$ is equal to the smallest distance between two parallel lines between which $K$ lies, hence $w\le d$. Denote by $\mP(K)$  the set of algebraic polynomials $p$ of degree exactly $n$,  all of whose zeros lie in  $K$. The (normalized) quantity under our study is the ``inverse Markov factor'' or ``oscillation factor''
\begin{equation}\label{mainconst}
M_{n,q}(K,\mu) := \inf_{\substack{p\in\mathcal{P}_n(K),\\ \|p\|_q\neq 0}} \frac{\|p'\|_q}{\|p\|_q}.
\end{equation}
The most important case is when $K$ is convex and $\mu$ is the arc length measure on the boundary of $K$, in which case we will simply write  $M_{n,q}(K)$.

Problem \eqref{mainconst} goes back to Tur\'{a}n \cite{Tur}, who
raised the question of the inequality
\begin{equation}\label{tur}
\|p'\| _{C(K)} \ge M_{n,\infty} (K)\|p\|_{C(K)}, \quad   p \in \mathcal{P}_n(K).
\end{equation}
This inequality is a kind of converse to the classical inequalities of Markov and Bernstein.
With any positive constant in place of $M_{n,q}(K)$ the inequality \eqref{tur} fails to hold on the set of all polynomials of degree $n$, as is shown by the example of polynomials $z^n+c$ as $c\to \infty$.
For this reason, we need to restrict the class of polynomials for getting a sound inequality with nonzero constant.
Tur\'{a}n imposed the additional condition $\ds p \in \mathcal{P}_n(K)$, and
studied the problem for the interval $K=I:=[-1,1]$ and the unit disk $K=D:=\{z\in \CC:|z|\leq1\}$.
He proved that
\begin{equation}\label{T1}
M_{n,\infty} (D)=\frac{n}{2}, \quad \text{and} \quad  M_{n,\infty} (I)\geq \frac{\sqrt{n}}{6}.
\end{equation}
Moreover, he pointed out by example of $(1 - x^2)^n$ that the $\sqrt{n}$ order in the latter relation cannot be improved upon. In the same year, Er\H{o}d \cite{Erod} continued Tur\'{a}n's research and showed that $M_{n,\infty}(I) = \sqrt{n/e}+O(1/n),$ $n\to \infty$. More importantly, he extended the study of Tur\'{a}n's problem to general convex domains of $\CC$, and obtained several results in the maximum norm for various general classes of convex domains. On account of the above, we will term these type of inequalities ``Tur\'an-Er\H{o}d type inequalities'', and call the respective optimal constants $M_{n,q}(K,\mu)$ the ``Tur\'an-Er\H{o}d constants'', too. The growth order of $M_{n,q}(K,\mu)$ in terms of the degree $n$ is in the focus of our study.

In the full generality of all compact convex sets Levenberg and Poletsky proved the general inequality $M_{n,\infty} (K)\geq \dfrac{\sqrt{n}}{20 d} $ \cite[Theorem 3.2]{LP}. The order of this result is best possible for the interval case, but for domains with nonempty interior a susbtantial improvement is possible. Namely, for any compact convex domain $K$
it was obtained in 2006 \cite[Theorem 1]{SzR} by Hal\'{a}sz and R\'{e}v\'{e}sz that for all $n\in \NN$
$$\ds M_{n,\infty} (K)\geq 0.0003 \frac{w}{d^2}  n,$$
and R\'{e}v\'{e}sz also proved \cite[Theorem 2]{SzR}
\begin{equation}\label{Revesz01}
M_{n,\infty}(K) \leq 600 \frac{w}{d^2} n,
\quad \textrm{for} \ n>\dfrac{d^2}{128w^2}\ln\dfrac{d}{16w}.
\end{equation}
Note that both estimates have the same form and depend only on $n$, $w$ and $d$, so that up to an absolute constant factor, even the dependence on the geometric features of the general convex body is established.

Recently these results were partially extended to the $L^q$ norms ($q\ge 1$) with respect to the arc length measure $\ell$ on the boundary curve $\Gamma$ of $K$ by Glazyrina and R\'{e}v\'{e}sz.
They proved \cite{GR1, GR2} that the growth order of $M_{n,q}(K)$ is again $n$ for a certain class of compact convex domains, including all smooth\footnote{A convex domain is smooth if it has a unique supporting line at each boundary point.}, compact convex domains $K$ and also convex polygonal domains having no acute angles at their vertices. It was conjectured \cite[Conjecture 1]{GR2} that even for arbitrary compact convex domains the growth order of $M_{n,q}(K)$ should be $n$. In \cite{GR3}, it is shown that in $L^q$ norm the oscillation order is at least $n/\log n$ for all compact convex domains.
From the other direction, Glazyrina and R\'{e}v\'{e}sz  proved that one cannot expect more than order $n$ growth because in fact $M_{n,q} (K)\le \dfrac{15}{d}n$, $(q\ge 1)$ (a combination of Theorem 5 and Remark 6 of \cite{GR1}.)

In this paper we study $L^q$-norms for finite $q$, and obtain $\infty$-norm estimates only via direct limiting cases. %%% The paper continues the series of articles \cite{GR1,GR2,GR3}, in which the quantity $ M_{n,q}(K)$ is studied.
Therefore, in this introduction we also restrict mainly to $L^q$ results. Detailed overviews of further results in Tur\'{a}n type inequalities can be seen in \cite{SzR,GR1,GR2, GR3}. Note also that until our work \emph{weighted} $L^q$ norms were only considered for the interval $I$ (and even there only with absolutely continuous measures with some density function $w$). One notable result is due to Varma \cite{Varma1, Varma2}, who proved that for the interval $\sqrt{\frac12 n + \frac34 +\frac{3}{4n}} <M_{n,2}(I) \le \sqrt{\frac12 n + \frac34 +\frac{3}{4(n-1)}}$, which in itself is not a weighted result, but Varma also compared some weighted norms of $p'$ to non-weighted norm of $p$, which implied his above mentioned results.
Varma and Underhill \cite{Underhill} studied the inequality
\begin{equation}\label{Iweight}
\|p'w\|_{L^q[-1,1]} \geq C_{n,q,w}\|pw\|_{L^q[-1,1]}, \quad
p \in \mP([-1,1]).
\end{equation}
They  found the sharp value $M_{n,q}(I,w)$ of $C_{n,q,w}$
for even $n$, $q=2$, $w(x)=(1-x^2)^{\alpha}$, $\alpha>1$,
and for $q=4$, $w(x)=(1-x^2)^3$. For the cases of $q=4$, $w(x)=(1-x^2)$ and
$w(x)=(1-x^2)^2$, they established the right order of magnitude of the respective Tur\'an-Er\H{o}d constants.
Xiao and Zhou \cite[p. 198, Theorem 1, Corollary 1]{Xiao} proved that $M_{n,\infty}(I,w) \ge C(w) \sqrt{n}$
for any nonnegative, continuous, piecewise monotonic weight $w(x)$ on $[-1,1]$. They also pointed out that
for the Jacobi weight $(1+x)^{\alpha}(1-x)^{\beta}$, $\alpha,\beta \ge 0$ the $\sqrt{n}$ order cannot be improved.
Wang and Zhou  \cite[Theorems 1 and 2]{Wang} proved $M_{n,q}(I,w) \ge C(w,q) \sqrt{n}$ for generalized Jacobi weights, i.e. for weights with finite total mass $\int_{-1}^1 w(x)dx$ and satisfying that values of $w(x_1), w(x_2)$ are within constant ratio if the variables $x_1, x_2$ have bounded proportion of distances from the interval endpoints 1 or $-1$.
Yu and Wei \cite[Corollaries 1 and 2]{Yu} obtained Tur\'an type inequalities for doubling, and in case of $q=\infty$, for so-called $A^*$ weights. Subsequently, in \cite{Chinese} the results for doubling weights were extended to a somewhat larger class of weights, called "$N$-doubling weights".

Some $L^q$ results were obtained for the disk and its perimeter with the (unweighted) arc length measure on it, but truly weighted versions were not derived. Combining the results of Malik \cite{Malik69} (obtained for $R \le 1$) and Govil \cite{Govil} (proved for $R \ge 1$), it is known that denoting $D_R:=\{z\in\CC~:~|z| \le R\}$ and
putting $M_{n,q}(K,Q):=\inf_{P \in \mP(K)} \| P'\|_{L^q(Q)} /\|P\|_{L^q(Q)}$, we have
$$
M_{n,\infty}(D_R,D)=
\begin{cases}
\dfrac{n}{1+R}, R\le 1 \\ \dfrac{n}{1+R^n}, R\ge 1
\end{cases}
.
$$
Regarding maximum norm, there are some further results in the literature for the two discs case, sharpening the above in case there is suitable information on zeroes or coefficients, but these results do not improve the above--sharp in themselves--inequalities in general.

In \cite{Malik} the exact Tur\'an-Er\H{o}d-type comparison constant of the circle is computed between $\infty$-norm and $q$-norm, i.e. it is proved that for any $q>0$ and $P\in P_n(D)$ it holds $\|p'\|_{\infty} \ge A_q \|p\|_q$, where $A_q$ is an explicit constant, attained in case $P(z)=z^n+1$ (and equivalent polynomials). This was generalized by Aziz \cite{Aziz} to any $L^q$ norms, even different ones on the two sides of the respective inequalities, both for
zeroes in $D_R$ with $R\le 1$ and also for $R>1$.

Formerly \cite{GR1, GR2, GR3}, we extended $L^q$-norm investigations from the interval and circle case to boundary arc length $L^q$ norm of compact convex sets, but only here we proceed towards general weighted norms, moreover even to measures $\mu$ and $L^q(K,\mu)$ norms.

This seems to be appropriate also regarding the other, natural possibility of extensions, namely, regarding the ``two-set problem'' with two prescribed sets: one, say $Q$, for taking the norm and one, say $K$, for the location of zeros. Maximum norm results for two sets $K$ and $Q$ can be viewed as the limiting case of $L^q(K,\mu)$ estimates with respect to a fixed measure $\mu$ supported exactly in $Q$, while restricting the zeros of the considered polynomials to lie in $K$ (i.e., the polynomials to belong to $\mP(K)$). As discussed above, that type of results were derived in the disk case with $Q$ being the unit circle and $K$ being another concentric disk \cite{Malik69, Govil, Aziz}.

As an enlightening example (and the only one we know about apart from the concentric circle cases), let us quote a recent result of Komarov. In 2019, Komarov \cite{Komarov} obtained the estimate $M_{n,\infty}(D^+,I)\ge 2\sqrt{n} /(3\sqrt{210e})$ for  $K=D^{+}:=\{z\in D~:~ \Im z\geq0\}$, where in general we mean $M_{n,\infty}(K,Q):=\inf_{p\in \mP (K)} \|p'\|_{C(Q)}/\|p\|_{C(Q)}$. The $\sqrt{n}$ order in this estimate is sharp\footnote{Indeed, already Tur\'an showed that there exists a polynomial $p\in \Pn(I)$ with $\|p'\|_{C(I))}=O(\sqrt{n}) \|p\|_{C(I)}$, and any such polynomial is automatically in Komarov's class $\Pn(D^{+})$.}.  As explained above, this can be considered as a special case of a measure defined weighted norm estimate, with $\supp \mu=I$ (say $\mu$ can be the linear Lebesgue measure on $I$).

We also note that Erd\'{e}lyi \cite{Erd} generalized Komarov's result further to the set
of polynomials of degree $n$ having at least $n-\ell$ zeros in $D^+$ and at least one zero in $[-1,1].$

As Komarov's result shows, if the width of $\supp \mu$ is zero, then the result may follow the pattern of Tur\'{a}n's result for $I$, the order of growth being $\sqrt{n}$. We are more interested in the second case, when the support of the measure does not degenerate to a subset of a straight line segment. The only cases known are cases when $\mu$ is the arc length measure $\ell$ of the boundary curve $\partial K$ of a certain compact convex domain $K$; and in all known cases the growth order of $M_{n,q}(K)$ is $n$. However, for fully general compact convex domains we only know estimates to the effect $n/\log n \ll M_{n,q}(K) \ll n$ and only conjectured that for all compact convex domains $K$ the order of $M_{n,q}(K)$ should indeed be $n$. Here we will extend investigations to other measures, and will in particular consider the situation with $\mu$ being the area- (Lebesgue-) measure $\lambda$ restricted to $K$, where $K$ is a compact convex domain.

Let us recall the following. From Tur\'{a}n's proof \cite[the footnote on p. 93]{Tur} it follows that if for the point $z\in \partial K$ there is a disk $D_R$ of radius $R$
passing through $z$ and containing $K$ then
\begin{equation}\label{Rcirc}
|p'(z)|\ge \frac{n}{2R}|p(z)|.
\end{equation}
In 2002, Levenberg and Poletsky [31, Proposition 2.1] called a set $K$ $R$-circular
if  \eqref{Rcirc} holds for all points $z\in \partial K$
\cite[p. 176, Theorem 2.2]{LP}.
 It is easy to verify that an $R$-circular set is always convex
\cite[p. 176, Corollary 2.3, Remark]{LP}.
Suppose that  $K$ is $R$-circular and a finite measure $\mu$ is supported on $\partial K$ -- the boundary of $K$.
Raising inequality \eqref{Rcirc} to a power $q>0$ and integrating over $\partial K$
with respect to  $\mu$, we get
\begin{equation*}\label{Rcirc1}
\int_{\partial K}|p'(z)|^q \mu(dz) \ge  \frac{n^q}{(2R)^q} \int_{\partial K}|p(z)|^q \mu(dz).
\end{equation*}
Hence  $M_{n,q}(K,\mu)\ge \dfrac{n}{2R}$. This was already noted by Tur\'{a}n for the disk (and arc length on $\partial D$), and formally presented by Levenberg and Poletsky for general $R$-circular domains and the boundary arc length measure $\ell$ on $\partial K$, see Proposition 5 in \cite{LP}. However, it automatically extends to any measure supported on $\partial K$ as well.

Our  goal is to obtain an upper bound for $M_{n,q}(K,\mu)$ in terms of  $n$,  $d$,  $w$, and some characteristics of the measure $\mu$. Let us note that we cannot expect any sound results if $\supp \mu$ is finite, for one can easily guarantee (for sufficiently large degree $n$) that $p'$ vanishes on a given prescribed finite set in $K$. Therefore, we can surely restrict our attention to the case when for two points $A$ and $B$ the measure is not supported on just those two points. In other words, $\mu(\{A,B\})<\mu(K)$, i.e. $\mu(K\setminus\{A,B\})>0$. Let us select and fix any diameter of $K$, and let $A$ and $B$ be the endpoints of this given diameter. Then the straight lines $a$ and $b$, passing through $A$ and $B$ and orthogonal to the diameter $AB$, must be strict supporting lines, so that $a\cap K=\{A\}$, and $b \cap K =\{B\}$ only. In fact, more is true: K is a subset of the disks of radius $d$ about $A$ and about $B$ as well. Therefore, outer regularity of the measure $\mu$ entails that with sufficiently close parallel lines $a'$ and $b'$ to $a$ resp. $b$, even the $\mu$-measure of the part of $K$ between $a'$ and $b'$ stays positive. These considerations motivate our formal parameterization of the main result of the paper.

For the following, we introduce some further notation.
Let $K$ be a compact set, and points $A, B\in K$ be such that $|B-A|=d$. We will denote by $K_\delta$ the part of the set $K$, enclosed between two parallel lines that are perpendicular to the segment $[A,B]$ and located at the distance $\delta d/2$ from the midpoint $(A+B)/2$ of the segment, i.e.,
\begin{equation}\label{Kdelta}
K_\delta:=\left\{z\in K :  \Re\right((z-(A+B)/2)e^{-i\arg(B-A)}\left) \in [- \delta d/2,  \delta d/2] \right\}.
\end{equation}

\begin{theorem}\label{theorem1}
Let $\ds K $ be a compact subset of~$\mathbb{C}$ with  width $w>0$ and diameter $d$. Suppose that a finite non-negative measure $\mu$
is given on $K$ and there are $\theta \in (0,1)$ and $\delta \in (0,1)$ such that
\begin{equation}\label{ogrMu}
\mu\left(K_\delta\right)
\geq \theta \mu(K).
\end{equation}
Then for any $0<q\le \infty$ and
\begin{equation}\label{ncondTh}
\ds n \ge 2(1+1/q)\frac{d^2}{w^2}\ln\frac{d}{w}
\end{equation}
we have\footnote{We set $1/q=0$ for $q=\infty$.}
\begin{equation}\label{Main}
M_{n,q}(K,\mu)\le
C_q(\delta,\theta)\frac{w}{d^2}n, \quad \mbox{where} \quad
C_q(\delta,\theta):= \dfrac{121}{1-\delta}\left(1 +\frac{2}{\theta}  \right)^{1/q}.
\end{equation}
\end{theorem}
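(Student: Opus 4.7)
My plan is constructive: I will exhibit a specific polynomial $p \in \mathcal{P}_n(K)$ realizing the claimed bound. After a rigid motion, assume $A = -d/2$, $B = d/2$ are the endpoints of a fixed diameter on the real axis, so that $K_\delta = \{z \in K : |\Re z| \le \delta d/2\}$. The polynomial would be built along the lines of the one used by R\'ev\'esz in \cite[Theorem 2]{SzR} to prove the $L^\infty$ upper bound $M_{n,\infty}(K) \le 600(w/d^2)n$. The properties I would extract from the construction are (i) a pointwise estimate $|p'(z)| \le C_1(\delta)(w/d^2) n\, |p(z)|$ valid on $K_\delta$ with $C_1(\delta)$ independent of $n$, (ii) a uniform bound $\|p'\|_{L^\infty(K)} \le C_2(w/d^2) n \|p\|_{L^\infty(K)}$, and (iii) a ``small oscillation'' property $\min_{K_\delta}|p| \ge c(\delta)\|p\|_{L^\infty(K)}$. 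Estimate (i) can only be expected on the middle slab because any polynomial with all zeros in $K$ must vanish at some point of $K$, making $|p'/p|$ blow up near its zeros.

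Granting (i)--(iii), the $L^q$-bound follows by splitting $\|p'\|_q^q = \int_{K_\delta}|p'|^q\, d\mu + \int_{K \setminus K_\delta}|p'|^q\, d\mu$. By (i), the first summand is at most $(C_1 wn/d^2)^q \|p\|_q^q$. By (ii) and $\mu(K \setminus K_\delta) \le (1-\theta)\mu(K)$, the second is at most $(C_2 wn/d^2)^q \|p\|_{L^\infty(K)}^q (1-\theta)\mu(K)$. Conversely, (iii) and $\mu(K_\delta) \ge \theta \mu(K)$ give $\|p\|_q^q \ge c(\delta)^q \theta \mu(K) \|p\|_{L^\infty(K)}^q$, whence $\|p\|_{L^\infty(K)}^q \mu(K) \le (c(\delta)^q \theta)^{-1}\|p\|_q^q$. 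Substituting yields a bound of the form
$$\|p'\|_q^q \le (wn/d^2)^q \|p\|_q^q \Bigl[C_1^q + \frac{C_2^q(1-\theta)}{c(\delta)^q \theta}\Bigr],$$
and after tracking the concrete values of $C_1, C_2, c(\delta)$ from the construction, the bracket is expected to simplify to the form $\bigl(\tfrac{121}{1-\delta}\bigr)^q (1+2/\theta)$, giving exactly $C_q(\delta,\theta) = \frac{121}{1-\delta}(1+2/\theta)^{1/q}$.

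The main obstacle is realizing (i) and (iii) simultaneously in an explicit polynomial: the naive choice $p(z) = ((z-A)(z-B))^{n/2}$ does satisfy a pointwise bound like (i) on $K_\delta$, but the ratio $\min_{K_\delta}|p|/\|p\|_{L^\infty(K)}$ is exponentially small in $n$, which defeats (iii) completely. A finer construction, involving a more subtle placement of zeros along suitable boundary arcs of $K$ (rather than accumulating them solely at $A$ and $B$), is therefore required. The degree threshold \eqref{ncondTh} arises because such a construction absorbs geometric factors of the form $(1 + O(w^2/d^2))^n \le (d/w)^{O(1)}$ into absolute constants, which requires $n \gtrsim (1+1/q)(d/w)^2 \ln(d/w)$; the appearance of $1+1/q$ (rather than just $1$) in this threshold matches precisely the additional contribution from the $K \setminus K_\delta$ integral to the final $L^q$ bound, and degenerates to the $L^\infty$ condition as $q\to\infty$.
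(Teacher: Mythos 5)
Your plan has the right general shape (exhibit an explicit $p\in\PK$, split the $L^q$-norm of $p'$ over two pieces of $K$, bound each piece), and you correctly identify the main obstruction to the naive construction. But the gap you flag at the end is fatal rather than incidental. Property (iii), $\min_{K_\delta}|p|\ge c(\delta)\|p\|_{L^\infty(K)}$ with $c(\delta)$ independent of $n$, is not achievable simultaneously with (ii) in the small-width regime: in order to make $\|p'\|_\infty\lesssim (w/d^2)\,n\,\|p\|_\infty$ the zeros of $p$ must cluster near the endpoints of the diameter, and then, as you yourself observe for $p(z)=(1+z)^{n-k}(1-z)^k$, the ratio $\min_{K_\delta}|p|/\|p\|_\infty$ decays exponentially in $n$ for any fixed $\delta$ of order one. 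You postulate a ``finer construction'' to cure this but give none; and the paper in fact uses exactly this naive polynomial \eqref{pnk}, so no finer construction is being hidden in the argument. In addition, your property (i) on all of $K_\delta$ is already too strong for $(1+z)^{n-k}(1-z)^k$: here $|p'/p|=n|M-z|/(|1+z||1-z|)$, and $|M-z|$ is of order $\delta d$, not $w$, over the slab $K_\delta$, so the factor $w$ in (i) cannot be extracted on so wide a set.

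The paper circumvents both issues by choosing a different decomposition and, crucially, a different comparison point. After reduction to the small-width case $w\le(1-\delta)/m$, the split is $K=Q\cup(K\setminus Q)$ where $Q$ is a slab of width only about $7w$ around the peak $M$, not the whole middle strip $K_\delta$ of width $\sim\delta d$. On $Q$ the factor $|M-z|$ really is $O(w)$, giving the pointwise bound your (i) wanted but on the right set. The lower bound for $\|p\|_q$ is obtained not against $\|p\|_{L^\infty(K)}$ but against the single value $p(A-w)$, using Lemma~\ref{lemma2} to guarantee $\mu(Q^*)\ge\frac{w\theta}{2}\mu(K)$ for a sub-slab $Q^*\subset K_\delta$ of width $w$; and all the upper bounds for $|p'|$ on $K\setminus Q$ (via Lemmas~\ref{lemma4}--\ref{lemma6}) are likewise expressed through the nearby values $p(A-w)$, $p(B+w)$. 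Since every term is compared with essentially the same value of $p$, the exponential ratio between $\|p\|_\infty$ and $\min_{K_\delta}|p|$ never appears, which is precisely the point your (iii) fails to address. A separate, much simpler argument (with $p(z)=(1-z)^n$) handles the large-width case $w>(1-\delta)/m$, where your scheme would also need modification.
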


If $K$ is convex and  $\mu$ is the area on $K$ (i.e., the restriction to $K$ of the Lebesgue measure $\lambda$ on the plain) or $\mu$ is the (linear) Lebesgue measure (i.e. the arc length measure $\ell$) on the boundary of $K$, then \eqref{ogrMu} take place for any $\theta \in (0,1)$ with a suitably chosen $\delta$. As we  will show in the last section, it is possible to estimate  $\theta$ via $\delta$, and then optimize with respect to $\de\in(0,1)$, finally getting a bound in function of $q$ only. This leads to the following corollaries.

\begin{corollary}\label{cor1}
Let $K\subset \CC$ be a compact convex subset of $\CC$ having width $w>0$ and diameter $d$.
Let $0<q\le \infty$, and
$\ds {n \ge 2(1+1/q)\frac{d^2}{w^2}\ln\frac{d}{w}}$.
If $\mu$ is  the linear Lebesgue measure on the boundary of $K$ (arc length measure $d\ell$),  then we have
$$
M_{n,q}(K) \le C_q \frac{w^2}{d} n,
$$
where
\begin{equation}\label{C1}
 C_q:= 121 \frac{3q+2+2\sqrt{q^2+3q+1}}{5q} \left(3+2q+2\sqrt{q^2+3q+1}\right)^{1/q}.
\end{equation}
\end{corollary}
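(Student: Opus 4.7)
The plan is to apply Theorem~\ref{theorem1} with $\mu=\ell$, the arc length measure on the boundary of the compact convex set $K$, and then to choose $\delta\in(0,1)$ optimally. The crux is to exhibit, for each $\delta$, an admissible value $\theta=\theta(\delta)$ such that $\ell(\partial K\cap K_\delta)\ge\theta\,\ell(\partial K)$.

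For the geometric step, I would fix a diameter $AB$ of $K$ and orient coordinates so that $AB$ lies on the real axis. Let $a'$ and $b'$ denote the two vertical chords of $K$ on the lines $\{\Re z=-\delta d/2\}$ and $\{\Re z=+\delta d/2\}$. The set $\partial K\cap K_\delta$ then decomposes into two convex sub-arcs (one on each side of $AB$), each joining an endpoint of $a'$ to an endpoint of $b'$. Since the horizontal distance between such endpoints is $\delta d$, each sub-arc has chord length at least $\delta d$, whence its arc length is at least $\delta d$; summing gives
\[
\ell(\partial K\cap K_\delta)\ \ge\ 2\delta d.
\]
For the total length, $K$ is contained in the bounding rectangle of sides $d$ (along $AB$) and $h$ (perpendicular to $AB$), where $h\le d$ since any orthogonal extent is bounded by the diameter. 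Monotonicity of perimeter under convex inclusion then gives $\ell(\partial K)\le 2(d+h)\le 4d$. Combining, one may take $\theta(\delta):=\delta/2$.

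With this choice, Theorem~\ref{theorem1} supplies, for each $\delta\in(0,1)$, the bound
\[
M_{n,q}(K)\ \le\ \frac{121}{1-\delta}\Bigl(1+\frac{4}{\delta}\Bigr)^{1/q}\cdot\frac{w}{d^{2}}\,n,
\]
under the hypothesis \eqref{ncondTh} on $n$. The final step is to minimize the coefficient in $\delta\in(0,1)$. Setting the derivative of the logarithm of the coefficient to zero yields the quadratic
\[
q\delta^{2}+4(q+1)\delta-4=0,
\]
with positive root $\delta^{*}=2\bigl(\sqrt{q^{2}+3q+1}-(q+1)\bigr)/q$. Rationalization, using $(3q+2)^{2}-4(q^{2}+3q+1)=5q^{2}$ and $(q+1)^{2}-(q^{2}+3q+1)=-q$, then gives
\[
\frac{1}{1-\delta^{*}}=\frac{3q+2+2\sqrt{q^{2}+3q+1}}{5q},\qquad 1+\frac{4}{\delta^{*}}=3+2q+2\sqrt{q^{2}+3q+1},
\]
and substitution reproduces the constant \eqref{C1}.

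The only non-routine step is the geometric estimate $\theta(\delta)\ge\delta/2$, and in particular the choice of the rectangle perimeter bound $\ell(\partial K)\le 4d$ rather than the slightly sharper Cauchy bound $\pi d$; the specific algebraic form of \eqref{C1}, with the expression $\sqrt{q^{2}+3q+1}$, indicates that this is exactly the perimeter bound underlying the corollary. The subsequent optimization and simplification are elementary.
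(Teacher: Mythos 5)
Your proof is correct and follows essentially the same route as the paper's: normalize to diameter $d$ along the real axis, bound the boundary length inside $K_\delta$ below by $2\delta d$ (two arcs, each with horizontal span $\delta d$), bound the total perimeter above by $4d$ via the bounding rectangle, obtain the admissible value $\theta=\delta/2$, and then minimize $\frac{121}{1-\delta}(1+4/\delta)^{1/q}$ over $\delta\in(0,1)$, which leads to the same quadratic $q\delta^2+4(q+1)\delta-4=0$ and the same algebraic simplification to \eqref{C1}. The only cosmetic difference is in the perimeter bound: the paper, after Lemma~\ref{lemma1}, uses the rectangle $[-1,1]\times[-iw,iw]$ and writes $4(1+w)\le 8$, whereas you bound the vertical extent directly by $h\le d$ and get $2(d+h)\le 4d$; both yield the same $\theta(\delta)=\delta/2$, and your variant has the minor advantage of not resting on $w\le d/2$.
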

As a consequence, if $q=\infty$, then $M_{n,\infty}(K)\le 121 \dfrac{w^2}{d} n$ for $n\ge 2\dfrac{d^2}{w^2}\ln \dfrac d w$, improving upon the earlier estimate \eqref{Revesz01}.

\begin{corollary}\label{cor2}
Let $K\subset \CC$ be a compact convex subset of $\CC$ having width $w>0$ and diameter $d$.
Let $0<q\le \infty$, and
$ \ds {n \ge 2(1+1/q)\frac{d^2}{w^2}\ln\frac{d}{w}}$.
If $\mu$ is the two dimensional Lebesgue measure $\lambda$ on $K$ (area), then we have
$$
M_{n,q}(K,\lambda) \le C_q^*\frac{w^2}{d} n,
$$
where
\begin{equation}\label{C2}
C_q^*:=121 \frac{5q+4+2\sqrt{4q^2+10q+4}}{9q} \left( 4q+5+2\sqrt{4q+10q+4}\right)^{1/q}.
\end{equation}
\end{corollary}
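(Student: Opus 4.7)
The plan is to reduce everything to Theorem~\ref{theorem1} by exhibiting an explicit function $\theta(\delta)$ valid for $\mu=\lambda$ on any compact convex $K$, and then to minimize $C_q(\delta,\theta(\delta))$ over $\delta\in(0,1)$. Matching the recipe $C_q(\delta,\theta)=\frac{121}{1-\delta}(1+2/\theta)^{1/q}$ of Theorem~\ref{theorem1} against the stated expression~\eqref{C2} singles out the choice $\theta(\delta)=\delta/4$, so my goal is to prove the area bound $\lambda(K_\delta)\ge(\delta/4)\lambda(K)$.

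For the geometric step, I place the diameter $[A,B]$ on the real axis, with $A=(-d/2,0)$ and $B=(d/2,0)$. The diameter condition $|P-A|,|P-B|\le d$ forces $\Re P\in[-d/2,d/2]$ for every $P\in K$, so $K$ sits in the axis-parallel rectangle $[-d/2,d/2]\times[y_{\min},y_{\max}]$, giving $\lambda(K)\le dV$ with $V:=y_{\max}-y_{\min}$. Since at least one of $y_{\max}$, $-y_{\min}$ is $\ge V/2$, one can pick $P_0\in K$ with $|\Im P_0|\ge V/2$; reflecting across the real axis if needed, I assume $\Im P_0\ge V/2$. By convexity the triangle $T:=\operatorname{conv}\{A,P_0,B\}$ lies in $K$ and has area $\lambda(T)=\frac{d}{2}\Im P_0\ge\frac{dV}{4}\ge\frac{\lambda(K)}{4}$. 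Writing $x_0=\Re P_0\in[-d/2,d/2]$, an elementary integration of the piecewise-linear cross-section of $T$ over $[-\delta d/2,\delta d/2]$—split into subcases according to whether the apex $x_0$ lies inside or outside the middle strip—yields in every case
$$
\lambda(T\cap K_\delta)\;\ge\;\delta\,\lambda(T),
$$
with the ratio $\delta$ only approached in the degenerate limit $x_0\to\pm d/2$. Chaining the two inequalities gives $\lambda(K_\delta)\ge(\delta/4)\lambda(K)$, so Theorem~\ref{theorem1} applies with $\theta=\delta/4$ and delivers
$$
M_{n,q}(K,\lambda)\;\le\;\frac{121}{1-\delta}\Bigl(1+\frac{8}{\delta}\Bigr)^{1/q}\frac{w}{d^2}\,n
$$
for every $n$ satisfying \eqref{ncondTh}.

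It remains to choose the optimal $\delta\in(0,1)$. Differentiating $\log\bigl[(1-\delta)^{-1}(1+8/\delta)^{1/q}\bigr]$ and equating to zero gives the quadratic $q\delta^2+8(q+1)\delta-8=0$, whose positive root $\delta^*=\frac{2(\sqrt{4q^2+10q+4}-2(q+1))}{q}$ lies in $(0,1)$. Rationalizing $1/(1-\delta^*)$ with the conjugate $5q+4+2\sqrt{4q^2+10q+4}$ yields the first factor $(5q+4+2\sqrt{4q^2+10q+4})/(9q)$ of $C_q^*$, and rationalizing $1+8/\delta^*$ with the conjugate $\sqrt{4q^2+10q+4}+2(q+1)$ yields $4q+5+2\sqrt{4q^2+10q+4}$; together these reconstruct formula~\eqref{C2}.

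The main technical obstacle is the uniform triangle estimate $\lambda(T\cap K_\delta)\ge\delta\lambda(T)$, which needs case analysis in the apex position; the worst case ($x_0\to\pm d/2$) is precisely what makes the ratio tight at $\delta$ and forces the factor $1/4$ to come from the bounding-box step rather than from the triangle step. Everything else—the estimate $\lambda(K)\le dV$, the selection of $P_0$, and the algebraic rationalization—is routine.
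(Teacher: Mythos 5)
Your proof is correct and reaches the same intermediate conclusion ($\theta=\delta/4$) as the paper, after which the optimization over $\delta$ is identical, so the two arguments agree from that point on. The geometric step is done differently, however. The paper, working in the normalized frame $A=-1$, $B=1$, invokes Lemma~\ref{lemma1} to get $\lambda(K)\le 4w$, and then — because the vertical width of $K$ equals $w$ — extracts two points $z^+=x^++i\alpha w$ and $z^-=x^--i(1-\alpha)w$ of $K$, showing separately by a triangle/trapezium comparison that $\lambda(K_\delta^+)\ge\alpha w\delta$ and $\lambda(K_\delta^-)\ge(1-\alpha)w\delta$, so that $\lambda(K_\delta)\ge w\delta$ and hence $\theta=\delta/4$. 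You instead bound $\lambda(K)$ by the bounding-box area $dV$ with $V:=y_{\max}-y_{\min}$ (not the minimal width $w$), pick a single point $P_0$ with $|\Im P_0|\ge V/2$, inscribe the one triangle $T=\operatorname{conv}\{A,P_0,B\}$, and prove the uniform estimate $\lambda(T\cap K_\delta)\ge\delta\,\lambda(T)$. I checked that lemma: writing the tent cross-section of $T$ with apex at $x_0$, the middle-strip mass divided by the total is $2\delta/(1+|x_0|)\ge\delta$ when $|x_0|\ge\delta$ and $1-(1-\delta)^2/(1-x_0^2)\ge\delta$ (equivalently $x_0^2\le\delta$, which holds since $|x_0|<\delta<1$) when $|x_0|<\delta$, with equality only as $x_0\to\pm1$, exactly as you indicated. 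Your version trades the paper's two-half decomposition (which uses the width $w$ directly and requires two triangles/trapezia) for a single inscribed triangle plus the slightly more delicate apex case analysis; both deliver $\theta=\delta/4$, and your computation of the optimal $\delta$ and the resulting constant~\eqref{C2} is accurate (the paper's display has a typographical slip $\sqrt{4q+10q+4}$ in the last factor, which should clearly be $\sqrt{4q^2+10q+4}$, as in your derivation).
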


\section{Auxiliary results for the upper estimate of $M_{n,q}(K,\mu)$ }\label{ar}

Our proof starts with the observation that the constant $C_q(\delta,\theta)$ is invariant under an affine map of $\CC$. To be more precise, consider an affine map
$\ds \phi(z) = \kappa (z- z_0)$, $\kappa\in \CC\setminus \{0\},$ $z_0\in \CC$,
and the image $
\widetilde{K}=\{\phi(z) \  : \ z\in K\}$ of $K$ under this map.
%Define $\sigma$-algebra on $\widetilde{K}$ as the set of images $\widetilde{E}=\phi(E)$ of measurable sets $E\subset K$, and
Define the Borel measure $\w{\mu}$ on $\w{K}$ by the formula $\w{\mu} (\widetilde{E})= \mu ( \psi(\widetilde{E})),$ where $\psi(t)=t/\kappa +z_0$ is the inverse map to $\phi.$

The widths and the diameters of $K$ and $\widetilde{K}$ are related  by equalities
\begin{equation*}\label{dwK}
\ds w(\widetilde{K}) = \kappa w(K), \qquad d(\widetilde{K})=\kappa d(K).
\end{equation*}
The parameters $\theta$ and $\delta$ in \eqref{Kdelta}
are the same for $K$ and $\widetilde{K}.$

A polynomial $p(z)\in \PK$ if and only if  $\widetilde{p}(t):=p(\psi(t)) \in \mathcal{P}_n(\widetilde{K})$. By \cite[p.~190, Theorem~3.6.1]{Bogachev} we have
$$
\|\w p\|_{L^q(\w K)}^q=\int_{\w K}|\w p(t)|^q (\mu \circ \psi) (dt)=\int_{K}|\w p(\phi(z))|^q \mu(dz)
=\int_{K}|p(z)|^q \mu(dz)= \|p\|_{L^q(K)}^q,
$$
and $ \|\w{p} '\|_{L^q(\w K)}=(1/\kappa)\|p'\|_{L^q(K)}$,
as  $\ds \w{p}'(t) = p'(\psi(t))\psi'(t) =(1/\kappa)p'(\psi(t)).$

If we will obtain  estimate~\eqref{Main} for the set $\w{K}$, then
that will yield
$$
\frac{\|p'\|_q}{\|p\|_q}=\kappa\frac{\|\widetilde{p}'\|_q}{\|\widetilde{p}\|_q}\le \kappa C_q(\delta,\theta) \frac{w(\w{K})}{d^2(\w{K})}n=C_q(\delta,\theta)\frac{w(K)}{d^2(K)}n,
$$
i.e. we will obtain estimates for $K$ and $\mu$ with the same constant factor.

On account of the above observation, we can assume without loss of generality that the compact set $K$ has diameter $d=2$ and the selected diameter endpoints are the points $-1$ and $1$.
In this case the set $K_\delta$ defined by \eqref{Kdelta} takes  the form
\begin{equation}\label{Kdeltanorm}
K_\delta=\{x+iy\in K : x\in[-\delta,\delta]\}.
\end{equation}

\begin{lemma}\label{lemma1}
Let $\ds K $ be a set having width $w>0$ and diameter $d=2$, with the points $-1$ and $1$ belonging to $K$. Then $K$ is contained in the rectangle $[-1,1]\times [-iw,iw]$.
\end{lemma}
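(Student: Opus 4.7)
The plan is to prove the two coordinate bounds $|\Re z| \le 1$ and $|\Im z| \le w$ for every $z = x + iy \in K$ separately. The horizontal bound is immediate: since $-1, 1 \in K$ and $d(K) = 2$, any $z \in K$ satisfies both $|z - 1| \le 2$ and $|z + 1| \le 2$, which give $(x - 1)^2 \le 4$ and $(x + 1)^2 \le 4$, forcing $x \in [-1, 1]$.

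For the vertical bound, fix $z = x + iy \in K$ with $y > 0$ (the case $y < 0$ is symmetric), and consider the three-point subset $T := \{-1, 1, z\} \subseteq K$ and the triangle $\Delta$ it spans. The plan is to compute $w(\Delta)$ exactly and then bridge back to $K$ by two standard facts from planar convex geometry: the width of a set is monotone under inclusion, and the width of any bounded set equals the width of its convex hull (since a linear projection attains the same extrema on a set and on its convex hull). Together these yield $w(\Delta) = w(T) \le w(K) = w$.

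The triangle $\Delta$ has base $[-1, 1]$ of length $2$ and apex $z$ at height $y$, so its area equals $y$. Its three altitudes are therefore $h_z = y$ (from $z$ onto the base) and $h_{\pm 1} = 2y/|z \mp 1|$ (from the base vertices onto the opposite sides). The diameter bound $|z \mp 1| \le d = 2$ gives $h_{\pm 1} \ge y$, so the minimum altitude of $\Delta$ equals $y$. Since the width of a triangle equals its smallest altitude, $w(\Delta) = y$, and combining with the chain above yields $y \le w$, as required.

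The main subtlety is that $K$ is not assumed convex, so one cannot claim the filled triangle $\Delta$ lies in $K$; this is precisely why we pass through the three-point subset $T$ and appeal to the convex-hull invariance of width. The critical quantitative observation is that the diameter constraint forces each slanted altitude $h_{\pm 1}$ to be at least $y$, pinning $w(\Delta)$ at \emph{exactly} $y$ rather than merely bounding it above by $y$; the weaker inequality $w(\Delta) \le y$ would be inconclusive, as we need the value of $w(\Delta)$ in order to transfer to a lower bound on $w$.
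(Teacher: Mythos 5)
Your proof is correct and follows essentially the same route as the paper's: both reduce the claim to the triangle with vertices $-1$, $1$, $z$, observe that its width is the smallest altitude, namely the altitude onto the longest side $[-1,1]$ (longest because $d(K)=2$), which equals $|\Im z|$, and then transfer $w(\Delta)\le w(K)$ via monotonicity. Your write-up is merely a bit more explicit about the convex-hull step and the horizontal bound, which the paper compresses into the phrases ``all vertices of $T$ are points of $K$'' and ``obviously.''
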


\begin{proof}
The  idea of the proof was proposed by  Hungarian mathematician S\'{a}ndor Krenedits.

Denote by $c^*=x+iy^*$ a point on the boundary of $K$ such that
$\ds |y^*| = \max_{x+iy \in K} |y|.$
Obviously, $K$ is contained in the  rectangle $[-1,1]\times [-i|y^*|,i|y^*|]$.
We need to show that $|y^*|\le w.$
Denote by $T$ the triangle with the vertices $a=-1$, $c^*$, $b= 1$.
As $w$ is the width of $K$ and all vertices of $T$ are points of $K$, we have $w(T)\le w=w(K)$.
The width of $T$ is equal to the smallest height of the triangle,
i.e.  the height drawn to the longest side.
Since the largest side is $ab$ ($2$ is the diameter of $K$),
the length of the smallest height equals $|y^*|$.
Hence $|y^*|=w(T) \le w(K).$
\end{proof}

\begin{lemma}\label{lemma2}
Let $ K $ be a compact set  with a positive width $w>0$ and diameter $d=2$,
and let $ K $ contain the points $-1$, $1.$
Suppose that a finite non-negative measure $\mu$
is given on $K$ and there are $\theta \in (0,1)$ and $\delta \in (0,1)$ such that
\eqref{ogrMu} holds.
If $w+\delta<1$, then
there exists an interval $[A-w,A]$ such that either $0<A\le \delta$ or
$-\delta \le A-w<0$ and
the set  $$Q^* =  \{x+iy \in K:\, A-w \leq x \le A \}$$
satisfies the inequality
\begin{equation}\label{lemma2_02}
\mu(Q^*) \geq \frac{w}{2}\theta\mu(K).
 \end{equation}
\end{lemma}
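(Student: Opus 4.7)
The plan is a simple covering plus pigeonhole argument: I would cover the real projection of $K_\delta$ by a finite collection of admissible strips of width $w$, and then extract the $\mu$-heaviest one.

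Set $m := \lceil 2\delta/w \rceil$. For $k = 1, \ldots, m-1$ define $A_k := -\delta + kw$, and set $A_m := \delta$. Let $Q_k := \{x+iy \in K : A_k - w \leq x \leq A_k\}$, so that the real projections $[A_k - w, A_k]$ are closed intervals of length $w$. The first $m-1$ of these tile $[-\delta, -\delta + (m-1)w]$, while the last is $[\delta - w, \delta]$; these two pieces overlap (or at least meet) because $mw \geq 2\delta$. Hence $[-\delta,\delta] \subseteq \bigcup_{k=1}^m [A_k - w, A_k]$, so $K_\delta \subseteq \bigcup_{k=1}^m Q_k$ and subadditivity gives
\[
\sum_{k=1}^m \mu(Q_k) \;\geq\; \mu(K_\delta) \;\geq\; \theta\mu(K).
\]

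Next I would verify that every $A_k$ is admissible, i.e., satisfies $0 < A_k \leq \delta$ or $-\delta \leq A_k - w < 0$. For $A_m = \delta$ Case~1 is immediate. For $k \leq m-1$ the ceiling definition yields $(m-1)w < 2\delta$, hence $A_k = -\delta + kw \leq -\delta + (m-1)w < \delta$, and also $A_k - w = -\delta + (k-1)w \geq -\delta$. Thus if $A_k > 0$ then Case~1 applies; if $A_k \leq 0$ then $A_k - w \leq -w < 0$ together with $A_k - w \geq -\delta$ gives Case~2.

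Finally, the hypothesis $w + \delta < 1$ yields $2\delta + w < 2$, so $mw \leq 2\delta + w < 2$, i.e.\ $m < 2/w$. Combining this with the pigeonhole bound, some index $k^{*}$ satisfies $\mu(Q_{k^{*}}) \geq \theta\mu(K)/m > \frac{w}{2}\theta\mu(K)$, and setting $A := A_{k^{*}}$, $Q^{*} := Q_{k^{*}}$ proves the lemma. The only point requiring care is the inclusion of the auxiliary strip $[\delta - w, \delta]$ at the right end: without it, the last $A_k$ from a naive uniform partition would overshoot $\delta$ and violate admissibility when $2\delta/w \notin \mathbb{Z}$. Otherwise, no step presents any serious difficulty.
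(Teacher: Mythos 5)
Your proof is correct and uses the same cover-and-pigeonhole idea as the paper's proof: cover $K_\delta$ by at most $2/w$ width-$w$ strips, then select the $\mu$-heaviest one. The only cosmetic difference is that you anchor the strips at $-\delta$ with an extra terminal patch $[\delta-w,\delta]$, whereas the paper anchors at $0$ (strips $[(\ell-1)w,\ell w]\cap[-\delta,\delta]$ for $\ell=-L+1,\dots,L$ with $L=\lceil\delta/w\rceil$), which makes the admissibility check of the chosen $A$ a little more immediate; both covers use at most $2/w$ strips and yield the same bound $\frac{w}{2}\theta\mu(K)$.
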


\begin{proof}
Let us  set $L:=\lceil\delta/w\rceil$ (the least integer greater than or equal to $\delta/w$). Note that $L\ge 1$ by definition. Let us represent the set $K_\delta$ as $$K_\delta=\bigcup\limits_{\ell=-L+1}^L Q_\ell,
\ \mbox{where} \
Q_{\ell} :=\{x+iy\in K:  x\in \left[(\ell-1) w, \ell w\right]\cap [-\delta,\delta]\}.
$$
Take $\ell_0 \in\{-L+1,\ldots,L\}$  such that $\ds \mu(Q_{\ell_0})=\max_\ell \mu(Q_{\ell})$, then
$$
\theta \mu(K)\le \mu(K_\delta)\le \sum_{\ell=-L+1}^L \mu(Q_{\ell})
\le 2L \mu(Q_{\ell_0})\le 2\dfrac{\delta+w}{w} \mu(Q_{\ell_0})\le \dfrac2w \mu(Q_{\ell_0}),
$$
hence $\mu(Q_{\ell_0})\ge \dfrac w2 \theta \mu(K)$.

If $\ell_0>0$ we set $A:=\min \{\ell_0 w, \delta\}$, and
if $\ell_0\le 0$ we set $A:=w+\max \{-\ell_0w, -\delta\}$.
In both cases $Q_{\ell_0} \subset Q^*$, which implies  \eqref{lemma2_02}.
\end{proof}

The proof of Theorem~\ref{theorem1} will consist of a construction of a suitable polynomial $p$ and careful estimates of its norm and the norm of its derivative. We will take this polynomial in the form
\begin{equation}\label{pnk}
p(z)=(1+z)^{n-k}(1-z)^k.
\end{equation}
It is worth pointing out that  the values of $|p(z)|=|p(x+iy)|$
increase with increasing  $|y|$.
For this reason we will need estimates $|p|$ on the intervals $[-1,1]$ and $[-1+iw,1+iw].$
In the following three lemmas we study the behaviour of $|p|$ on these intervals.

\begin{lemma}\label{lemma4}
Suppose that $n$ and $k$ are positive integers, $2k <n$.
Then $p(x)=(1+x)^{n-k}(1-x)^k$ attains its maximum on $[-1,1]$ at the point $M=1-2k/n \in (0,1)$,
increases on $[-1,M]$, decreases on $[M,1]$, and
\begin{equation}\label{compp}
p(M-x) \ge p(M+x) \quad \mbox{for all} \quad x\in [0, 1-M].
\end{equation}

\end{lemma}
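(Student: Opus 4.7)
The plan is to handle the monotonicity and extremum claims by direct computation of $p'$, and then obtain the symmetry inequality $p(M-x)\ge p(M+x)$ via a logarithmic-derivative estimate that exploits $M>0$.

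First, I will differentiate $p(x)=(1+x)^{n-k}(1-x)^k$ using the product rule and factor out $(1+x)^{n-k-1}(1-x)^{k-1}$:
$$
p'(x)=(1+x)^{n-k-1}(1-x)^{k-1}\bigl((n-k)(1-x)-k(1+x)\bigr)
=(1+x)^{n-k-1}(1-x)^{k-1}\bigl((n-2k)-nx\bigr).
$$
The first two factors are strictly positive on $(-1,1)$, so the sign of $p'$ is that of $(n-2k)-nx$. Hence $p'$ vanishes only at $x=M:=(n-2k)/n=1-2k/n$ inside $(-1,1)$; and because $2k<n$ with $k\ge 1$, we have $0<M<1$. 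The sign of $(n-2k)-nx$ being positive for $x<M$ and negative for $x>M$ yields the claimed monotonicity, and since $p(\pm1)=0$ while $p(M)>0$, the maximum on $[-1,1]$ is attained exactly at $M$.

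Second, for the inequality $p(M-x)\ge p(M+x)$ on $[0,1-M]$, I observe that at $x=1-M$ we have $p(M+x)=p(1)=0\le p(M-x)$, so it suffices to work on $[0,1-M)$. Set
$$
g(x):=\log p(M-x)-\log p(M+x), \qquad x\in[0,1-M).
$$
From $\log p(y)=(n-k)\log(1+y)+k\log(1-y)$ one gets
$$
\frac{p'(y)}{p(y)}=\frac{n-k}{1+y}-\frac{k}{1-y}=\frac{(n-2k)-ny}{1-y^2}.
$$
Substituting $y=M\pm x$ and using $(n-2k)-n(M\pm x)=\mp nx$ gives $p'(M-x)/p(M-x)=nx/(1-(M-x)^2)$ and $p'(M+x)/p(M+x)=-nx/(1-(M+x)^2)$. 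Therefore
$$
g'(x)=-\frac{p'(M-x)}{p(M-x)}-\frac{p'(M+x)}{p(M+x)}
=nx\left(\frac{1}{1-(M+x)^2}-\frac{1}{1-(M-x)^2}\right).
$$
Both denominators are positive on $[0,1-M)$; and since $M\ge 0$ and $x\ge 0$ imply $|M+x|\ge|M-x|$, we have $1-(M+x)^2\le 1-(M-x)^2$, so the bracket is nonnegative. Thus $g'(x)\ge 0$; combined with $g(0)=0$ this yields $g(x)\ge 0$, i.e.\ $p(M-x)\ge p(M+x)$.

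Main obstacle: There is no real obstacle—this is a one-variable calculus lemma. The only subtle point is recognizing that the inequality $(M+x)^2\ge(M-x)^2$, which is what drives the pointwise domination, uses precisely the strict inequality $M>0$ (equivalently $2k<n$); in the borderline case $M=0$ one would merely have $p(-x)=p(x)$ rather than strict dominance on the side away from the maximum. Correctly packaging the log-derivative computation and handling the endpoint $x=1-M$ (where $p(M+x)$ vanishes) are the only items requiring a brief comment.
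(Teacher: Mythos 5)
Your monotonicity/extremum argument is identical to the paper's (same factorization of $p'$). For the symmetry inequality \eqref{compp}, however, you take a genuinely different and somewhat cleaner route. The paper writes $\ln p(M+x)=\ln p(M)+n\tau(x)$, expands $\tau$ in a power series about $0$, and observes that every odd-degree coefficient is negative, whence $\tau(x)<\tau(-x)$ for $x\in[0,1-M)$. You instead set $g(x)=\log p(M-x)-\log p(M+x)$, compute $g'(x)=nx\bigl(\tfrac{1}{1-(M+x)^2}-\tfrac{1}{1-(M-x)^2}\bigr)$, and deduce $g'\ge 0$ directly from $|M+x|\ge|M-x|$ (which is where $M>0$ enters), so $g\ge g(0)=0$. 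Both prove the same inequality, and both localize the role of $2k<n$ (i.e.\ $M>0$); your version trades the Taylor-coefficient sign analysis for a one-line derivative estimate, avoiding convergence/bookkeeping issues with the series entirely. Your handling of the endpoint $x=1-M$ (where $p(M+x)=0$) by a separate remark mirrors the paper's appeal to continuity there. The computation of $p'(y)/p(y)$, the substitutions $y=M\pm x$, and the sign of the bracket are all correct, so the argument is sound.
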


\begin{proof}
An easy computation shows that
\begin{equation}\label{pp}
p'(x) = n(1+x)^{n-k-1}(1-x)^{k-1}(1-2k/n-x),
\end{equation}
thus the polynomial $p$  attains its maximum on $[-1,1]$
at the point $M=1-2k/n \in (0,1)$, increases on $[-1,M]$, decreases on $[M,1]$.
We can write  $p(M+x)$ for $x\in(-(1-M),1-M)$  in the form
\begin{align*}
p(M+x)&=(1+M)^{n-k}(1-M)^{k}\left(1+\frac{x}{1+M}\right)^{n-k}\left(1-\frac{x}{1-M}\right)^{k}
\end{align*}
and
$$
\ln p(M+x) = \ln p(M)+n\tau(x), \ \text{where} \
\tau(x) = \left(1-\frac{k}{n}\right)\ln\left(1+\frac{x}{1+M}\right)+\frac{k}{n}\ln\left(1-\frac{x}{1-M}\right).
$$
We note that $k/n = (1-M)/2$, $1-k/n=(1+M)/2$ and expand $\tau(x)$
in a Taylor series on the interval $(-(1-M), 1-M)$:
\begin{align*}
\tau(x)&=\frac{1+M}{2}\sum_{\ell=1}^{\infty}\frac{(-1)^{\ell-1}x^{\ell}}{\ell(1+M)^{\ell}}+
\frac{1-M}{2}\sum_{\ell=1}^{\infty}\frac{(-1)^{\ell-1}(-1)^{\ell}x^{\ell}}{\ell(1-M)^{\ell}}\\
&=
\sum_{\ell=1}^{\infty}\frac{(-1)^{\ell-1}(1-M)^{\ell-1}-(1+M)^{\ell-1}}
{2\ell(1-M^2)^{\ell-1}}x^{\ell}.
\end{align*}
Since  $1-M^2>0$, it follows that for odd $\ell$ the sign of the coefficient  of $x^\ell$
equals the sign of the expression
$\ds (-1)^{\ell-1}(1-M)^{\ell-1} -(1+M)^{\ell-1}=(1-M)^{\ell-1}-(1+M)^{\ell-1}<0$.
This proves that $\tau(x)<\tau(-x),$ $x\in[0, 1-M)$, and consequently  $p(M-x) \ge p(M+x)$ for $x\in[0, 1-M)$.  The inequality is valid  at the point $x=1-M$ due to the continuity of $p$.
\end{proof}

For given positive integers $n$ and $k$, $0<2k\le n$ and a given $w\in(0,1)$ we introduce
the following notation
\begin{equation*}\label{fnk}
f_{n,k}(x)=((1+x)^2+w^2)^{n-k}((1-x)^2+w^2)^k, \quad M_{n,k}:=1-2k/n.
\end{equation*}

\begin{lemma}\label{lemma5}
Suppose that $n$ and $k$ are positive integers, $0<2k \sz{<}  n$, $M=M_{n,k},$
$w\in(0,(1-M)/4)$.
Then there are points
$$\ell_{n,k}\in(-1, -M), \quad  \widetilde{M}=\widetilde{M}_{n,k}\in (M,\, M+w/2), \quad \mbox{and} \quad r_{n,k}\in (1-w/2,1)$$
such that $f_{n,k}$ decreases on $[-1,\ell_{n,k}]$ and $[\widetilde{M},r_{n,k}]$, and increases on $[\ell_{n,k},\widetilde{M}]$ and $[r_{n,k},1]$.
\end{lemma}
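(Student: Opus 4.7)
My plan is to reduce the monotonicity of $f_{n,k}$ to the sign of an explicit cubic polynomial, then locate its three real roots via the intermediate value theorem. Since $f_{n,k}>0$, its monotonicity matches that of $g(x) := \ln f_{n,k}(x) = (n-k)\ln\bigl((1+x)^2+w^2\bigr) + k\ln\bigl((1-x)^2+w^2\bigr)$. Differentiating yields $g'(x)=\frac{2(n-k)(1+x)}{(1+x)^2+w^2}-\frac{2k(1-x)}{(1-x)^2+w^2}$, whose sign coincides with the sign of the numerator after clearing the strictly positive denominators. A routine algebraic simplification using $n-2k=nM$ turns this numerator into $H(x) := n\bigl[(1-x^2)(M-x)+w^2(M+x)\bigr]$, a cubic in $x$ with positive leading coefficient.

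I will then evaluate $H$ at the five points $-1,\, M,\, M+w/2,\, 1-w/2,\, 1$. Three of the values are immediate: $H(-1)=nw^2(M-1)<0$, $H(M)=2nMw^2>0$, and $H(1)=nw^2(M+1)>0$. The crucial computations are $H(M+w/2)<0$ and $H(1-w/2)<0$. Expanding $H(M+w/2)/n$ and pulling out the positive factor $w/2$, the first reduces to the polynomial inequality $M^2+5Mw+\tfrac{5}{4}w^2<1$; bounding $5Mw+\tfrac{5}{4}w^2$ via $w<(1-M)/4$ and dividing through by $1-M>0$ collapses this to $11M<59$, which holds trivially since $M<1$. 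An analogous expansion of $H(1-w/2)/n$, with the substitution $u:=1-M$ and the common factor $w>0$ pulled out, yields the inequality $\tfrac{5}{2}w<u+\tfrac{3}{4}uw+\tfrac{5}{8}w^2$; the hypothesis $w<u/4$ gives $\tfrac{5}{2}w<\tfrac{5u}{8}<u$, which confirms it.

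With the sign pattern $-,+,-,-,+$ at these five test points, the intermediate value theorem furnishes roots $\ell_{n,k}\in(-1,M)$, $\widetilde{M}_{n,k}\in(M,M+w/2)$, and $r_{n,k}\in(1-w/2,1)$ of $H$; since $H$ has degree three, these are all of its real roots. The standard alternation for a cubic with positive leading coefficient then gives $H<0$ on $(-1,\ell_{n,k})\cup(\widetilde{M}_{n,k},r_{n,k})$ and $H>0$ on $(\ell_{n,k},\widetilde{M}_{n,k})\cup(r_{n,k},1)$. Transferring these signs from $g'$ back to $f_{n,k}$ produces exactly the four monotonicity assertions of the lemma. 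The main obstacle is the careful arithmetic in the two non-trivial evaluations $H(M+w/2)<0$ and $H(1-w/2)<0$, where the cushion $w<(1-M)/4$ must be used efficiently to locate the two inner roots in the tight sub-intervals of length $w/2$; the remainder is a clean application of the intermediate value theorem combined with the degree constraint on the cubic.
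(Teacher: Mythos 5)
Your reduction to the cubic $H(x) = n\bigl[(1-x^2)(M-x)+w^2(M+x)\bigr]$ is exactly the paper's $n\,u(x)$, and your sign evaluations at $-1$, $M$, $M+w/2$, $1-w/2$, $1$ are all correct (I checked the arithmetic: $H(M+w/2)<0$ does reduce to $M^2+5Mw+\tfrac{5}{4}w^2<1$ and then, via $w<(1-M)/4$, to $11M<59$; and $H(1-w/2)<0$ reduces to $\tfrac{5}{2}w<u+\tfrac{3}{4}uw+\tfrac{5}{8}w^2$, which $w<u/4$ forces). Where your route differs from the paper is in the middle region: the paper bounds $u(x)$ on $(M,1)$ from above by $(1+M)\bigl(x^2-(1+M)x+M+w^2\bigr)$ and then estimates the two roots of this parabola, whereas you simply plug the two interval endpoints $M+w/2$ and $1-w/2$ into the cubic directly. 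Your method is a bit more elementary and avoids the root estimates of the parabola, at the cost of a slightly longer polynomial expansion; both are valid.

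There is, however, one genuine gap. The lemma asserts $\ell_{n,k}\in(-1,\,-M)$, but the sign pattern $-,+,-,-,+$ at the test points $-1,M,M+w/2,1-w/2,1$ only places $\ell_{n,k}$ in the wider interval $(-1,M)$, which is what you in fact conclude. To get the stated conclusion you need one more test point between $-1$ and $-M$: note that
\[
H(-M)=n\bigl[(1-M^2)\cdot 2M + w^2\cdot 0\bigr] = 2nM(1-M^2) > 0,
\]
so the sign change from $H(-1)<0$ to $H(-M)>0$ locates $\ell_{n,k}$ in $(-1,-M)$ as required (and the remainder of your root-counting argument goes through unchanged since the total number of roots is still three). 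With this one extra evaluation the proof is complete.
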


\begin{proof}
As observed above  $k/n=(1-M)/2$ and $(n-k)/n=(1+M)/2$.
% let us first observe
Our goal is to estimate  extremum points of the function
$$
h(x)=\frac1n \ln f_{n,k}(x)=
\frac{1+M}{2}\ln((1+x)^2+w^2)+\frac{1-M}{2}\ln((1-x)^2+w^2).
$$
We have
$$h'(x)  = \frac{(1+M)(1+x)}{(1+x)^2+w^2}-
 \frac{(1-M)(1-x)}{(1-x)^2+w^2}=\frac{2u(x)}{((1+x)^2+w^2)((1-x)^2+w^2)},
$$ where
$u(x)         =(M+x)w^2+(M-x)(1-x^2)=x^3-Mx^2-(1-w^2)x+M(1+w^2).$
By Descartes' rule of signs, the polynomial $u(x)$ has two positive zeros or no positive zeros at all.
It is easily seen that
$u(-1)<0$ and $u(-M), \, u(M),\, u(1)>0$.

Suppose $x\in (M,1),$ then $M-x<0$ and
\begin{align*}
u(x)&\le (M+1)w^2+(M-x)(1-x)(1+M)=(1+M)(x^2-x(1+M)+M+w^2).
\end{align*}
The parabola $v(x)=x^2-x(1+M)+M+w^2$ vanishes at the points\footnote{Here we use $4w <1-M$.}
$$x_1=\frac{1}{2}\left(1+M-\sqrt{(1-M)^2-4w^2} \right)
\ \mbox{and} \ x_2=\frac{1}{2}\left(1+M+\sqrt{(1-M)^2-4w^2} \right).$$
We can  estimate the left zero from above and the right zero from below  as
\begin{align*}
x_1&=\frac{1}{2}\left(1+M-(1-M)+(1-M)\left(1-\sqrt{1-4w^2/(1-M)^2}\right) \right)=\\
&=
M+\frac{1-M}{2}\frac{1-(1-4w^2/(1-M)^2)}{1+\sqrt{1-4w^2/(1-M)^2}}\le
M+\frac{1-M}{2} \frac{4w^2}{(1-M)^2}
\\&=
M+ \frac{2w^2}{1-M}\le M+\frac{w}{2},
\end{align*}
$$
x_2=(1+M)/2+ (1+M)/2-x_1\ge 1+M-M-w/2=1-w/2.
$$
Hence $u(x)$ lying below $v(x)$, is negative on $[M+w/2, 1-w/2]$.
Therefore, $u(x)$ has a zero at some point in $(M,\, M + w/2)$ -- we denote this point by $\widetilde{M}$ -- and another zero at some point in $(1-w/2,\,1)$, which we denote by $r_{n,k}$.

Observe that $u(x)$ changes its sign on $[-1,-M]$, $u(x)$ has at most three zeros
and $u(x)$ has two zeros on $(M,1)$. It follows that $u(x)$ has one zero
on $(-1,-M)$, which we denote by $\ell_{n,k}$, the other two which we denoted by $\widetilde{M}$ and $r_{n,k}$, and no more.
The Lemma is proved.
\end{proof}

\begin{lemma}\label{lemma6}
Let $w\in(0,1)$ and $A\in(0,1)$
be given numbers such that  $B:=A+3w<1$.
Suppose that positive integers $n$ and $k$ are chosen such that $0<2k < n-1$,
\begin{equation}\label{nw}
n\ge 4/w  \quad (\Leftrightarrow 2/n\le w/2),
\end{equation}
and
\begin{equation}\label{Mcond}
A+w \le M=1-2k/n \le A+\frac32 w.
\end{equation}
If
\begin{equation}\label{l503}
w\le \dfrac{1-B}{\sqrt{e^2-1}+2},
\end{equation}
then
\begin{equation}\label{fn1k}
f_{n-1,k}(x)\le  f_{n-1,k}(A-2w), \quad x\in[-1,A-2w],
\end{equation}
and
\begin{equation}\label{fn1k1}
 f_{n-1,k-1}(x)\le  f_{n-1,k-1}(B+2w), \quad x\in[B+2w,1].
\end{equation}
\end{lemma}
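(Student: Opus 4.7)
The plan is to prove both (\ref{fn1k}) and (\ref{fn1k1}) by applying Lemma~\ref{lemma5} to the parameter pairs $(n-1,k)$ and $(n-1,k-1)$ respectively, then carrying out an endpoint comparison on the interval in question. First I would verify the Lemma~\ref{lemma5} hypothesis $w < (1-M')/4$ in both cases: using $M_{n-1,k} \le M \le A + 3w/2$, $M_{n-1,k-1} \le M + 2/(n-1) \le M + 2w/3$ (from $n \ge 4/w$), and the consequence $1 - A \ge w(\sqrt{e^2-1}+5)$ of (\ref{l503}), this follows by direct estimation. Lemma~\ref{lemma5} then delivers the critical-point structure of each function: two local minima near $\pm 1$ and a single local maximum $\widetilde{M}$ lying in a small neighborhood of $M_{n-1,k}$ (resp.\ $M_{n-1,k-1}$).

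Next I would show that the interval $[-1, A-2w]$ does not contain $\widetilde{M}_{n-1,k}$, using $\widetilde{M}_{n-1,k} \ge M_{n-1,k} \ge M - 1/n \ge A + 3w/4 > A - 2w$; symmetrically, $\widetilde{M}_{n-1,k-1} \le M_{n-1,k-1} + w/2 \le M + 2w/3 + w/2 \le A + 8w/3 < A + 5w = B + 2w$, so $[B+2w,1]$ does not contain $\widetilde{M}_{n-1,k-1}$. Hence on each interval the only interior critical point of the relevant $f$ is a local minimum, and the maximum on the interval must be attained at one of the two endpoints. It then suffices to compare the endpoint values.

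For (\ref{fn1k}), writing $\alpha = A - 2w$, the inequality $f_{n-1,k}(-1) \le f_{n-1,k}(\alpha)$ reduces to $(n-1-k)\ln X \ge k \ln Y$ with $X = ((1+\alpha)^2+w^2)/w^2$ and $Y = (4+w^2)/((1-\alpha)^2+w^2)$. The algebraic identity
\begin{equation*}
((1+\alpha)^2+w^2)((1-\alpha)^2+w^2) - w^2(4+w^2) = (1-\alpha^2)(1-\alpha^2-2w^2),
\end{equation*}
combined with $1-\alpha^2 \ge 2w^2$ (which follows from $|\alpha| \le 1-5w$, itself a consequence of (\ref{l503})), yields $X \ge Y \ge 1$. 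Since $2k < n-1$ implies $n-1-k > k$, we get $(n-1-k)\ln X \ge k\ln X \ge k\ln Y$ at once.

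The comparison for (\ref{fn1k1}) is the main technical obstacle, because the analogous degree inequality $n-k > k-1$ runs in the unfavorable direction. This is precisely where (\ref{l503}) enters decisively: it is equivalent to $X' := ((1-B-2w)^2+w^2)/w^2 \ge e^2$, so $\ln X' \ge 2$, reducing the required inequality $(k-1)\ln X' \ge (n-k)\ln Y'$ to $(n-k)\ln Y' \le 2(k-1)$. With $\gamma = B+2w \ge M + 7w/2$, I would bound $\ln Y' \le \ln 4 + w^2/4 - 2\ln(1+M+7w/2)$, apply $\ln(1+x) \ge x - x^2/2$ to isolate the $O(w)$ part of $\ln(1+7w/(2(1+M)))$, and use the concavity estimate $(1+M)\ln(4/(1+M)^2) \le 2(1-M)$ for $M \in [0,1]$ (verified by noting that $G(M) := (1+M)\ln 2 - (1+M)\ln(1+M) - (1-M)$ satisfies $G(0) = \ln 2 - 1 < 0 = G(1)$ and $G'(M) = \ln(2/(1+M)) \ge 0$). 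This yields $(n-k)\ln Y' \le n(1-M) - 7nw/2 + O(nw^2)$, so the desired bound $\le 2(k-1) = n(1-M) - 2$ amounts to $nw(7/2 - O(w)) \ge 2$, which is handled by $n \ge 4/w$ together with $w < 1/(\sqrt{e^2-1}+5) < 1/7$ (this last upper bound being forced by $A > 0$ combined with (\ref{l503})). The main obstacle is the careful bookkeeping of the quadratic-in-$w$ correction terms at the upper end of the range of $w$.
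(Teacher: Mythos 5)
Your proposal is correct, and the framing is the same as the paper's: apply Lemma~\ref{lemma5} to $(n-1,k)$ and $(n-1,k-1)$ (after checking its hypothesis $w<(1-M')/4$), use the resulting monotonicity pattern to reduce each inequality to a single endpoint comparison $f(-1)\le f(A-2w)$ resp.\ $f(1)\le f(B+2w)$, and then verify that comparison. Where you diverge from the paper is in the endpoint comparisons themselves, and there the two proofs are genuinely different in character.

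For \eqref{fn1k} the paper passes to logarithms and applies $\ln x\le x-1$ to get
$\frac12\ln\frac{4+w^2}{(1-A+2w)^2+w^2}\le\frac{1+M_1}{1-M_1}$, which collapses the ratio to $\frac{1+M_1}{2}\ln\frac{e^2w^2}{(1+A-2w)^2+w^2}\le 0$. Your route instead uses the algebraic identity
$$\left((1+\alpha)^2+w^2\right)\left((1-\alpha)^2+w^2\right)-w^2(4+w^2)=(1-\alpha^2)(1-\alpha^2-2w^2),$$
together with $|\alpha|\le 1-5w$ (forced by \eqref{l503} and $A>0$), to conclude $X\ge Y\ge 1$ directly, and then closes with the exponent inequality $n-1-k>k$. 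This is more elementary; it also only uses a weakened form of the width condition, which is consistent with the paper's remark that \eqref{l501} (weaker than \eqref{l503}) already suffices for the first inequality. For \eqref{fn1k1} the paper simply mirrors its first argument: $\ln x\le x-1$, a comparison with $\frac{1-M_2}{1+M_2}$, and again an $e^2$ appears to match \eqref{l503} exactly. You, having committed to the algebraic approach, correctly note that the exponent inequality now runs the wrong way ($k-1<n-k$) and are therefore pushed into a quantitative estimate: $\ln X'\ge 2$ from \eqref{l503}, then a Taylor bound $\ln(1+t)\ge t-t^2/2$, the concavity inequality $(1+M)\ln\frac{4}{(1+M)^2}\le 2(1-M)$, and the slack from $nw\ge 4$. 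I checked that the bookkeeping does close: after dividing through, the quadratic-in-$w$ correction is bounded by $\frac{w}{8}\left(\frac{49}{1+M}+(1+M)\right)<\frac{51w}{8}<1$, while the linear term contributes $7/2$, so the needed $nw(7/2-O(w))\ge 2$ holds comfortably under $nw\ge 4$, $w<1/(\sqrt{e^2-1}+5)$. So the plan is sound; it is simply heavier than the paper's second half, where the $\ln x\le x-1$ device plus the $(1-M_2)/(1+M_2)$ comparison avoids all the correction-term accounting. A hybrid — your algebraic argument for \eqref{fn1k}, the paper's logarithmic argument for \eqref{fn1k1} — would be the shortest overall.
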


\begin{proof}
Let us first verify \eqref{fn1k}.
To simplify notation we set $M_1=M_{n-1,k}=1-\dfrac{2k}{n-1}$.
Since $2k<n-1$ by condition, we get
$$
0<M-M_1=\dfrac{2k}{n-1}-\dfrac{2k}{n}=\frac{2k}{n(n-1)}\le \dfrac{w}{4}  \quad
\mbox {and}\quad A- 2w< M - \frac12 w\le  M_1.
$$
By Lemma \ref{lemma5}\footnote{ We need here $2k<n-1$, not just $2k\le n-1$.} there are points $\ell_{n-1,k}\in (-1,-M_1)$
and $\widetilde{M}_{n-1,k}\in(M_1,M_1+w/2)$
such that the function $f_{n-1,k}$ decreases on $[-1, \ell_{n-1,k}]$
and increases on $[\ell_{n-1,k}, \widetilde{M}_{n-1,k}]$.
Thus it is enough to verify $f_{n-1,k}(-1)\le  f_{n-1,k}(A-2w).$
Applying inequality $\ln x\le x-1,$ $x>0$, gives
\begin{equation*}
\begin{aligned}   &\frac{1}{2}\ln\frac{4+w^2}{(1-A+2w)^2+w^2}\le    \ln\frac{2}{1-A+2w}\le
\frac{2}{1-A+2w}-1=  \frac{1+(A-2w)}{1-(A-2w)}    \le  \frac{1+M_1}{1-M_1}.
\end{aligned}
\end{equation*}
Using the last estimate we deduce
\begin{align*}
&\frac1n\ln\frac{ f_{n-1,k}(-1)}{ f_{n-1,k}(A-2w)}=
\frac{1+M_1}{2}\ln\frac{w^2}{(1+A-2w)^2+w^2}
+\frac{1-M_1}{2}\ln\frac{4+w^2}{(1-A+2w)^2+w^2}
\\
&\le\frac{1+M_1}{2}\ln \frac{w^2}{(1+A-2w)^2+w^2}+(1-M_1)\dfrac{1+M_1}{1-M_1}= \frac{1+M_1}{2}\ln \frac{e^2w^2}{(1+A-2w)^2+w^2}.
\end{align*}
Solving the inequality $\dfrac{e^2w^2}{(1+A-2w)^2+w^2}\le 1$, we obtain
\begin{equation}\label{l501}
\frac{e^2w^2}{(1+A-2w)^2+w^2}\le 1 \quad \Leftrightarrow \quad  \sqrt{e^2-1}w\le 1+A-2w
\quad \Leftrightarrow \quad w\le \frac{1+A}{\sqrt{e^2-1}+2}.
\end{equation}
Condition \eqref{l501} is  weaker than \eqref{l503}.

The proof for \eqref{fn1k1} is similar.
To simplify notation we set $M_2=M_{n-1,k-1}=1-\dfrac{2(k-1)}{n-1}$.
We have
$$
M_2-M=\dfrac{2k}{n}-\dfrac{2(k-1)}{n-1}=\frac{2(n-k)}{n(n-1)}\le \frac 2n\le \frac w2,
\quad M_2+w/2\le M+w<B \le B+2w.
$$
By Lemma \ref{lemma5} there are points $\widetilde{M}_{n-1,k-1}\in (M_2,M_2+w/2)$ and $r_{n-1,k-1}\in (1-w/2,1)$ such that
the function $f_{n-1,k-1}$ decreases on $[\widetilde{M}_{n-1,k-1},r_{n-1,k-1}]$
and increases on $[r_{n-1,k-1},1]$.
Thus it is enough to verify $f_{n-1,k-1}(1) \le  f_{n-1,k-1}(B+2w).$
We have
\begin{equation*}
\begin{aligned}
   &\frac{1}{2}\ln\frac{4+w^2}{(1+B+2w)^2+w^2}\le
    \ln\frac{2}{1+B+2w}\le  \frac{1-(B+2w)}{1+(B+2w)}
    \le  \frac{1-M_2}{1+M_2}
\end{aligned}
\end{equation*}
and
\begin{align*}
&\frac1{n-1}\ln\frac{ f_{n-1,k-1}(1)}{ f_{n-1,k-1}(B+2w)}= \frac{1+M_2}{2}\ln\frac{4+w^2}{(1+B+2w)^2+w^2}
\\&+\frac{1-M_2}{2}\ln\frac{w^2}{(1-B-2w)^2+w^2}
\le\frac{1-M_2}{2}\ln \frac{e^2w^2}{(1-B-2w)^2+w^2} .
\end{align*}
Solving the inequality $\ds \frac{e^2w^2}{(1-B-2w)^2+w^2}\le 1$, we find
$
\ds
w\le \frac{1-B}{\sqrt{e^2-1}+2}.
$

The Lemma is proved.
\end{proof}

\section{Outline of the proof of Theorem \ref{theorem1}}\label{sec:outline}

Recall that on account of  the observation at the beginning of Section \ref{ar}, we can  assume that $K$ has diameter $d=2$ and contains the points $-1$ and $1$.
In this case the set $K_\delta$, defined by \eqref{Kdelta}, takes  the form \eqref{Kdeltanorm}.

We put %%% Let $m$ be a number greater than $1$, it is convenient to choose
\begin{equation}\label{mcond}
  m:= 5+\sqrt{e^2-1}.
\end{equation} %(this equality will be  needed in Subsection~\ref{SsJpKQ}).
The proof of Theorem~\ref{theorem1} will be dived into two cases: the case when the width $w$ is relatively small, namely, $w \le (1-\delta)/m$, and the case when the width $w$ is large, $w >(1-\delta)/m$.

In both cases we first establish the assertion of Theorem~\ref{theorem1} for $0<q<\infty$, and then use a passage to the limit to deal with  $q=\infty$.

\section{The case of a small width}\label{Secsmallw}
Suppose that $$w\le (1-\delta)/m.$$
Then $w+\delta<1$ and by Lemma~\ref{lemma2}
there exists an interval $[A-w,A]$ such that either $0<A\le \delta$ or
$-\delta \le A-w<0$ and the set  $Q^* =  \{x+iy \in K:\, A-w \leq x \le A \}$
satisfies \eqref{lemma2_02}.
Without loss of generality we assume that $0<A\le \delta$.
%Therefore By Lemma \ref{lemma2_02} we have a set $Q^* =  \{x+iy \in K:\, A-w \leq x \le A \}$ satisfying \eqref{ogrMu}.
The  proof falls  into four subsections.

\subsection{Construction of a polynomial and a partition of the set $K$}\label{Ss}

%We also will need
%$$p_{n-1,k}(x)=((1+x)^2+w^2)^{(n-1-k)/2}((1-x)^2+w^2)^{k/2}, $$
%$$M-M_1=1-\frac{2k}{n}-1+\frac{2k}{n-1}=\frac{2k}{n(n-1)}\le \frac1n,$$
%$$((1+x)^2+w^2)^{n-k}((1-x)^2+w^2)^{k-1}=((1+x)^2+w^2)^{n-1-(k-1)}((1-x)^2+w^2)^{k-1}.$$
% $$M_2-M=1-\frac{2(k-1)}{n-1}-1+\frac{2k}{n}=2\frac{kn-k-nk+n}{n(n-1)}= % 2\frac{n-k}{n(n-1)}\le \frac2n.$$

Let us take
$B:=A+3w$
 and  introduce another bigger set
$$
Q := \left\{x+iy \in K:\, A-2w \leq x \leq B+2w\right\}.
$$
Our conditions imply that
$$
A-2w\ge -2w \ge -\dfrac{2}{m}>-1, \quad B+2w=A+5w< A+mw\le A+1-\delta\le 1,
$$
thus $Q\subset[-1,1]\times[-iw,iw].$

Consider the polynomial $p(z) = (1+z)^{n-k}(1-z)^k$ for $2\le 2k <n$.
By Lemma~\ref{lemma4} it  attains its maximum on $[-1,1]$
at the point  $\ds M=1-2k/n$.

If we take $n$ subject to \eqref{nw}, then we can choose $k:=k(n)$ such that \eqref{Mcond} is satisfied. Further, our choice of $A$ guarantees that $[A+w,A+\frac32 w]\subset(0,1).$

Suppose that $0<q<\infty.$ Let us write the norms $\|p\|^q_q$ and $\|p'\|^q_q$ as
$$
\|p\|^q_q=
\int_{Q}|p(z)|^{q} \mu(dz)+\int_{K \backslash Q}|p(z)|^{q}\mu(dz)=:J_Q+\Jdif,
$$
and
$$
\|p'\|^q_q=
n^q\left(\int_{Q}\frac{|p'(z)|^q}{n^q}\mu(dz)+\int_{K\backslash Q}\frac{|p'(z)|^q}{n^q} \mu(d z)\right)=:
n^q\left(J'_Q+\Jdif'\right).
$$
In the next subsections we prove two inequalities of the form
\begin{equation}\label{ab}
J'_Q\leq \alpha w^qJ_Q
\quad  \mbox{and} \quad
\Jdif'\leq \beta w^q J_{Q}.
\end{equation}
Since
\begin{equation*}
\|p'\|_q=n (J'_Q+\Jdif')^{1/q} \leq n  (\alpha+\beta)^{1/q} w J_Q^{1/q} \le
4(\alpha+\beta)^{1/q} \dfrac{w}{d^2}n\|p\|_q,
\end{equation*}
this will allow us to derive
\begin{equation}\label{abf}
C_{q}(\delta,\theta)\le 4(\alpha+\beta)^{1/q}.
\end{equation}

In the following we will need some estimates of the same type.
It is convenient to list them all right here at the beginning of the argument. So we will need
\begin{equation}\label{e1}
1-A\ge (1-\delta)\ge mw,
\end{equation}
\begin{equation}\label{e1}
    1-B=1-A-3w\ge (m-3)w,
\end{equation}
\begin{equation}\label{e2}
 1-(B+w) \ge (m-4)w,
\end{equation}
\begin{equation}\label{e3}
  1-(B+2w) =1-A-5w \ge 1-\delta-5(1-\delta)/m=(1-\delta)(1-5/m),
\end{equation}
\begin{equation}\label{e4}
B+w-M = A+4w-M\ge ( 5/2)w\ge 2w.
\end{equation}

\subsection{Estimate of $J'_{Q}$}
Using representation  \eqref{pp} with $z$ in place of $x$,  we estimate the integral $J'_Q$ as
$$
J'_Q = \int_{Q} \left(|1+z|^{(n-k)}|1-z|^{k}\right)^q\left(\frac{|M-z|}{|1+z||1-z|}\right)^q \mu(dz) \le
\max_{z\in Q }\left(\frac{|M-z|}{|1+z||1-z|}\right)^q J_Q.
$$
It is easily seen that  ($z=x+iy$, $x,y\in \RR$)
\begin{align*}
&\frac{|M-z|^2}{|1+z|^2|1-z|^2} =
\frac{\left(M-x\right)^2+y^2}{((1+x)^2+y^2)((1-x)^2+y^2)}
\le \frac{\left(M-x\right)^2/w^2+1}{(1-x^2)^2} w^2.
\end{align*}

By definition of the set $Q$, we get
$$
(x-M)/w\le (B+2w-A-w)/w=4 \quad
\text{and} \quad (M-x)/w \le (A+1.5 w-A+2w)/w \le 3.5,
$$
thus $(M-x)^2/w^2\le 16$, whilst by \eqref{e3} we have
$$
1-x^2\ge 1-x\ge 1-(B+2w)\ge (1-\delta)(1-5/m).
$$
Hence we obtain the estimate
\begin{equation}\label{JpQ}
J'_Q \le  \dfrac{17^{q/2}}{(1-\delta)^q(1-5/m)^q}w^q J(Q).
\end{equation}

\subsection{Estimate of $J'_{K\setminus Q}$}\label{SsJpKQ}

1.) We first estimate $J_{Q}$ from below.
By Lemma~\ref{lemma4} the polynomial  $p(x)$ increases on ${[A-w, A]\subset[-1,M]}$, hence for $z\in Q^*$
\begin{equation*}
|p(z)|=|1+z|^{n-k}|1-z|^{k} = ((1+x)^2+y^2)^{(n-k)/2}((1-x)^2+y^2)^{k/2}
\geq p(x) \geq p(A-w).
\end{equation*}
It follows that
\begin{equation}\label{JQA}
J_{Q}\ge \int_{Q^*}|p(z)|^q d\mu(z)
\ge p(A-w)^q\mu(Q^*) \ge  p(A-w)^q\frac{\theta}{2}w \mu(K).
\end{equation}
Furthermore, $p(A-w)=p(M-(M-A+w))\ge p(M+(M-A+w))$  by \eqref{compp} and
$p(M+(M-A+w)) \ge p(B+w)$ as $M<M+M-A+w \le A+\frac32 w+\frac32 w+w= B+w<1$,
and $p(x)$  decreases on $[M,1]$.
Combining this estimate and \eqref{JQA}  yields
\begin{equation}\label{JQB}
J_{Q}\ge p(B+w)^q\frac{\theta}{2}w \mu(K).
\end{equation}

To  estimate $J'_{K\setminus Q}$ from above, we represent $K\backslash Q$ in the form $K_A \bigcup K_B,$
where
$$K_A = \{x+iy \in K:\,  x\in [-1 , \,A-2w]\},\quad K_B = \{x+iy \in K:\, x\in [ B+2w,\, 1]\}.
$$

In the following applications use will be made of Lemma~\ref{lemma6}. All assumptions of the Lemma follow from Subsection \ref{Ss}. Let us verify condition \eqref{l503}. Indeed, because of \eqref{e1} condition \eqref{l503} will be proved once we prove the inequality
$$
w\le \dfrac{(m-3)w}{\sqrt{e^2-1}+2}
\quad \textrm{or equivalently} \quad
\sqrt{e^2-1}+2 \le m-3.
$$
According to the choice \eqref{mcond} of $m$, this holds with equality.

Suppose  $z\in K_A$, then  $|M-z|\le |1-z|$. Applying this,
inequality \eqref{fn1k} from Lemma~\ref{lemma6}, and the estimate
$1+A-2w \ge 1-A-2w \ge (1-\delta)(1-2/m)$
we obtain
\begin{align*}
\frac{|p'(z)|}{n}&=|1+z|^{n-k-1}|1-z|^{k-1}|M-z|\le |1+z|^{n-k-1}|1-z|^k  \le
| f_{n-1,k}(x)|\le
\\&\le
| f_{n-1,k}(A-2w)|= \frac{| p(A-2w+iw)|}{|1+A-2w+iw|}\le  \frac{ | p(A-2w+iw)|}{(1-\delta)(1-2/m)}.
\end{align*}
Consequently,
\begin{equation}\label{estA}
J'_{K_A}=\frac{1}{n^q}\int_{K_A}|p'(z)|^qd\mu(z) \le  \frac{| p(A-2w+iw)|^q}{(1-\delta)^q(1-2/m)^q}\mu(K_A).
\end{equation}

Suppose  $z\in K_B$, then  $|M-z|\le |1+z|.$ Applying this,
inequality \eqref{fn1k1} from Lemma~\ref{lemma6}, and  estimate
\eqref{e3}
we obtain
\begin{align*}
\frac{|p'(z)|}{n}&=|1+z|^{n-k-1}|1-z|^{k-1}|M-z|\le |1+z|^{n-k}|1-z|^{k-1} \le f_{n-1,k-1}(x)\le
\\&\le
| f_{n-1,k-1}(B+2w)|= \frac{| p(B+2w+iw)|}{|1-(B+2w)+iw|}\le
\frac{| p(B+2w+iw)|}{(1-\delta)(1-5/m)}.
\end{align*}
%for $ w\le \dfrac{1-A-(2r+1)w}{\sqrt{e^2-1}+2}\ge (1-\delta)\dfrac{1-(2r+1)/m}{\sqrt{e^2-1}+2}$
Consequently,
\begin{equation}\label{estB}
J'_{K_B}=\frac{1}{n^q}\int_{K_B}|p'(z)|^qd\mu(z) \le
\frac{| p(B+2w+iw)|^q}{(1-\delta)^q(1-5/m)^q}\mu(K_B).
\end{equation}
Adding \eqref{estA} to \eqref{estB} we get
\begin{align*}
J'_{K\setminus Q}&=J'_{K_A}+J'_{K_B}\le   \frac{| p(A-2w+iw)|^q}{(1-\delta)^q(1-2/m)^q}\mu(K_A)+
\frac{| p(B+2w+iw)|^q}{(1-\delta)^q(1-5/m)^q}\mu(K_B)\le \\
&\le \frac{\mu(K)}{(1-\delta)^q(1-5/m)^q}\max\{| p(A-2w+iw)|,\, | p(B+2w+iw)| \}^q.
\end{align*}
Combining this with \eqref{JQA} and \eqref{JQB} we deduce
\begin{equation}\label{Jp01}
\frac{J'_{K\setminus Q}}{J_Q}\le  \frac{2w^q}{\theta(1-\delta)^q(1-5/m)^q} \max\left\{\frac{| p(A-2w+iw)|}{w^{1+1/q} p(A-w)}, \, \frac{| p(B+2w+iw)|}{w^{1+1/q} p(B+w)}  \right\}^q.
\end{equation}

2.) Now our goal is to find conditions on the degree $n$ of the polynomial $p$ under which
\begin{equation}\label{Jp02}
\dfrac{|p(A-2w+iw)|}{ w^{1+1/q}p(A-w)} \le 1 \quad   \mbox{and} \quad
\dfrac{| p(B+2w+iw)|}{ w^{1+1/q}p(B+w)} \le 1
\end{equation}
or  equivalently
\begin{equation}\label{TwoNeqq}
\ln \frac{|p(A-2w+iw)|}{p(A-w)} <  (1+1/q) \ln w
\end{equation}
and
\begin{equation}\label{TwoNeqq2}
\ln \frac{|p(B+2w+iw)|}{p(B+w)} <  (1+1/q) \ln w.
\end{equation}

We first deal with inequality~\eqref{TwoNeqq}.
To simplify notation we set $A'=A-w$,
$$a := \frac{(1+A'-w)^2+w^2}{(1+A')^2},\quad b := \frac{(1-(A'-w))^2+w^2}{(1-A')^2}=\frac{(1-A'+w)^2+w^2}{(1-A')^2}.$$
Then the left side of \eqref{TwoNeqq} can be written as
\begin{gather*}
\ln \frac{|p(A-2w+iw)|}{p(A-w)}=\ln \frac{|p(A'-w+iw)|}{p(A')}=\frac12\ln a^{n-k}b^{k}=\frac{n}{4}\left((1+M)\ln a + (1-M)\ln b\right).
\end{gather*}
Since $\ln x \leq x-1$ ($x>0$), we have $\ln a \le a-1$ and
$\ln b \leq b-1$.
We proceed
\begin{equation}\label{f02}
\begin{gathered}
 (1+M)\ln a + (1-M)\ln b \leq
(1+M)(a-1) + (1-M)(b-1)  \\
=
(1+M)\frac{-2(1+A')w+2w^2}{(1+A')^2} + (1-M)\frac{2(1-A')w+2w^2}{(1-A')^2}
\\=
\frac{4w}{1-A'^2} \left(-(M-A')+w-\frac{2A'(M-A')w}{1-A'^2}\right)
\\
<
\frac{4w}{1-A'^2} \left(-(M-A')+w\right)=
\frac{4w}{1-A'^2} \left(-(M-A)\right) \le
4w\left(-(M-A)\right) \le
 -4w^2,
 \end{gathered}
\end{equation}
the last inequality following from \eqref{Mcond}.
Therefore, inequality \eqref{TwoNeqq} will be satisfied  once we get
$\dfrac n4(-4w^2)\le (1+1/q) \ln w.$
This is equivalent to the following restriction on $n$:
\begin{equation}\label{Jp04}
    n \ge \frac{(1+1/q)\ln 1/w }{w^2}.
\end{equation}

We now turn to inequality~\eqref{TwoNeqq2}.
With the notation $B':=B+w$,
$$
a_1 := \frac{(1+B'+w)^2+w^2}{(1+B')^2},\quad
b_1 := \frac{(1-(B'+w))^2+w^2}{(1-B')^2}=\frac{(1-B'-w)^2+w^2}{(1-B')^2},
$$
the left side of \eqref{TwoNeqq2} can be written as
\begin{gather*}
\ln \frac{|p(B+2w+iw)|}{p(B+w)}=\ln \frac{|p(B'+w+iw)|}{p(B')}\\
=\frac12\ln a_1^{n-k}b_1^{k}=\frac{n}{4}\left((1+M)\ln a_1 + (1-M)\ln b_1\right).
\end{gather*}

Replacing in \eqref{f02}  $A'$ by $B'=B+w$ and $w$ by $-w$,
and successively applying inequality \eqref{e2} to estimate $1-B'$ from below
and inequality  \eqref{e4} to estimate $B'-M$ from below, we get
\begin{gather*}
 (1+M)\ln a_1 + (1-M)\ln b_1 \le \frac{-4w}{1-B'^2}\left(-(M-B')-w-\frac{2B'(M-B')(-w)}{1-B'^2}\right)
 \\ = \frac{4w}{1-B'^2}\left((M-B')+w+\frac{2B'(B'-M)w}{1-B'^2}\right)\\
\le \frac{4w}{1-B'^2} \left(-(B'-M)+w+\frac{(B'-M)w}{1-B'}\right)\le
\frac{4w}{1-B'^2} \left(-(B'-M)+\frac{B'-M}{m-4}+w\right)
\\
=\frac{-4w^2}{1-B'^2} \left(\dfrac{B'-M}{w}\left(1-\frac{1}{m-4}\right)-1\right)
\le\frac{-4w^2}{1-B'^2} \left(\dfrac52\left(1-\frac{1}{m-4}\right)-1\right)
\\ \le -2w^2\left(3-\dfrac{5}{m-4}\right).
\end{gather*}
The inequality \eqref{TwoNeqq2} will be satisfied  once we get
$$
\dfrac{n}{4}(-2w^2)\left(3-\dfrac{5}{m-4}\right)\le (1+1/q) \ln w.
$$
This is equivalent to
\begin{equation}\label{Jp05}
n \ge \dfrac{2}{\left(3-5/(m-4)\right)}\frac{(1+1/q)\ln 1/w}{w^2}.
\end{equation}
So, we have established \eqref{TwoNeqq2} under restriction \eqref{Jp05}.

Therefore, if both \eqref{Jp04} and \eqref{Jp05} are satisfied, and, moreover, we also have \eqref{nw}, then both \eqref{TwoNeqq} and \eqref{TwoNeqq2} follows, whence \eqref{Jp02} obtains, too.
Combining \eqref{Jp01} and \eqref{Jp02} then yields
\begin{equation}\label{Jp06}
J'_{K\setminus Q}\le  \frac{2}{\theta(1-\delta)^q(1-5/m)^q}w^q J_Q
\end{equation}
for $n$  satisfying \eqref{Jp04}, \eqref{Jp05} and \eqref{nw}.

\subsection{Estimate of $J'_K$}
Suppose $0<q<\infty.$ Estimates \eqref{JpQ} and \eqref{Jp06} give us the numbers $\alpha$ and $\beta$ from  \eqref{ab}.
Substituting them into \eqref{abf}
  we obtain
 \begin{align*}
C_{q}(\delta,\theta)&\le 4(\alpha+\beta)^{1/q} =
4\left(  \dfrac{17^{q/2}}{(1-\delta)^q(1-5/m)^q}+
\frac{2}{\theta(1-\delta)^q(1-5/m)^q}  \right)^{1/q}
\\&\le
\frac{4\cdot 17^{1/2}}{(1-5/m)}\dfrac{1}{(1-\delta)}\left( 1 +\frac{2}{\theta}  \right)^{1/q}
\end{align*}
for $n$  satisfying \eqref{Jp04}, \eqref{Jp05} and \eqref{nw}.

Let us compare these three conditions.
Observe that $7<m<8$, which entails
\begin{equation*}\label{ncond01}
1<\dfrac{2}{3-5/4}<\dfrac{2}{3-5/(m-4)} <
\dfrac{2}{3-5/3}=1.5.
\end{equation*}
Thus we can replace the restriction \eqref{Jp05}  by
\begin{equation}\label{ncond1}
n > \frac{1.5(1+1/q)\ln 1/w}{w^2}.
\end{equation}
which is  stronger than \eqref{Jp04} and \eqref{ncondTh} in Theorem~\ref{theorem1}.
From   $w\le(1-\delta)/m\le 1/7$ and monotonicity of $x\ln x$ for $x\ge 7$ we get that $(1/w)\ln(1/w) \ge 7\ln 7\ge 7$. Thus it follows that
$$
\dfrac{2}{\left(3-5/(m-4)\right)}\frac{(1+1/q)\ln 1/w}{w^2}\ge \frac{\ln 1/w}{w^2}\ge \dfrac{7}{w}.
$$
Thus  \eqref{Jp05} is stronger than \eqref{nw} for all $q$, and in fact even without the term containing $q$.

Finally  taking into account that $4 \cdot 17^{1/2}/ (1 - 5/m)\le 4 \cdot 5/ (1- 5/7)=
70<121$ we obtain the assertion of Theorem~\ref{theorem1} for $0<q<\infty$ and all $n$ satisfying \eqref{ncond1}.

We are left with the case $q=\infty.$
Observe that the constructed polynomial  $p$ is independent of $q$,
and that any $n\ge \dfrac{2\ln 1/w}{w^2}$ meets \eqref{ncond1} for  sufficiently large $q$.
Therefore we can pass to the limit (see, e.g., \cite[Problem 4.7.44]{Bogachev}) for all $n \ge \dfrac{2\ln 1/w}{w^2}$
and obtain
\begin{equation}\label{qinfty}
M_{n,\infty}(K,\mu)\le \dfrac{\|p'\|_\infty}{\|p\|_\infty}=\lim_{q\to \infty}\dfrac{\|p'\|_q}{\|p\|_q}\le
\limsup _{q\to \infty}C_q(\delta,\theta)\dfrac{w}{d^2}n\le \dfrac{121}{1-\delta}\dfrac{w}{d^2}n.
\end{equation}
This proves Theorem~\ref{theorem1} for $q=\infty.$

\section{The case of a large width}

We now turn to the case $(1-\delta)/m < w\le 1$. Recall that  there exist $\theta \in(0,1)$ and $\delta \in (0,1)$ such that \eqref{ogrMu} holds.
Then at least one of the ``halves''
$K_\ell=K_\delta \cap( [-\delta,0]\times [-iw,iw])$ or
$K_r=K_\delta \cap ([0,\delta]\times [-iw,iw])$ has measure greater than $\dfrac \theta 2 \mu(K).$ Without loss of generality we assume that $\mu(K_\ell)\ge \dfrac \theta 2 \mu(K).$

Let us take the polynomial   $p(z)=(1-z)^n$, $n\ge 2.$

As in Section~\ref{Secsmallw} we first deal with $0 < q <\infty$.
We divide $K$ into two subsets
$$K_1 = \{x+iy \in K \ : \ x\in [-1, 3/4]\},\quad \mbox{and} \quad
K_2 = \{x+iy \in K \ : \ x\in [3/4, 1]\}.$$
The estimate $|1-z|\geq |1-x|\ge 1/4$ for $z\in K_1$ yields
\begin{equation}\label{bw02}
\left\|p'\right\|_{L^q(K_1,\mu)}^q=n\left\|(1-z)^{n-1}\right\|_{L^q(K_1,\mu)}^q \le n^q
\max_{z\in K_1}\frac{1}{|1-z|^q}\|p\|_{L^q(K_1,\mu)}^q \le  (4n)^q\|p\|_q^q .
\end{equation}
We proceed to estimate $\left\|p'\right\|_{L^q(K_2)}.$
Since  the diameter $d(K)=2$ and $-1, 1\in K$, the set $K$ is bounded by
the circle of radius $2$ and centered at the point $-1$, thus $y^2\le 4-(1+x)^2$ for any $z=x+iy\in K$.
This gives
$$|1-z|^2=(1-x)^2+y^2 \le (1-x)^2+4-(1+x)^2= 4-4x\le 1, \quad z\in K_2,$$
hence
\begin{equation}\label{bw01}
\left\|p'\right\|_{L^q(K_2,\mu)}^q=n^q\left\|(1-z)^{n-1}\right\|_{L^q(K_2,\mu)}  \leq
n^q\max_{z\in K_2}|1-z|^{(n-1)q}\mu(K_2) \leq n^q \mu(K).
\end{equation}
To  estimate $\mu(K)$ we note that
$\ds
\|p\|_q^q \ge \|p\|_{L^q(K_\ell)}^q \ge \min_{z\in K_\ell}|1-z|^{nq} \mu(K_\ell) \ge \dfrac{\theta}{2} \mu(K),
$
thus $\mu(K)\le \dfrac 2\theta \|p\|_q^q.$
Substituting this into \eqref{bw01} yields
\begin{equation}\label{bw03}
\left\|p'\right\|_{L^q(K_2,\mu)}^q \le n^q  \frac 2\theta  \|p\|_q^q.
\end{equation}
Adding \eqref{bw02} to \eqref{bw03} we deduce that
$$
\|p'\|_q^q \le \left((4n)^q +n^q\frac{2}{\theta}  \right)\|p\|_q^q \le
(4n)^q\left(1 +\frac{2}{\theta}  \right) \|p\|_q^q,
$$
and finally that
$$\dfrac{\|p'\|_q}{\|p\|_q} \le 4n\left(1 +\frac{2}{\theta}  \right)^{1/q} \le
4n\frac{m}{1-\delta}w \dfrac{4}{d^2}\left(1 +\frac{2}{\theta}  \right)^{1/q}\le
\frac{16m}{1-\delta}\left(1 +\frac{2}{\theta}  \right)^{1/q}\frac{w}{d^2}n.
$$
It remains to observe that $16m\le 121.$

A passage to the limit similar to \eqref{qinfty} proves the case $q=\infty$.

\section{Proof of the Corollaries}

Suppose first that $\mu$ is the linear Lebesgue measure on the boundary of $K$. We want to derive  Corollary \ref{cor1} from Theorem \ref{theorem1}.

\begin{proof}[Proof of Corollary \ref{cor1}] It is plausible that to any $0<\de<1$ there is some appropriate $\theta$ with which Theorem \ref{theorem1} can be applied. To obtain an effective estimate we first look for a concrete admissible value of $\theta$ corresponding to an arbitrary value of $\de\in(0,1)$.

 By Lemma~\ref{lemma1},
 $K$ is contained in the rectangle $[-1,1]\times [-iw, iw]$.
 By properties of convex curves \cite[p. 52, Property 5]{Bonnesen}
the arc length of $\partial K$ is not greater than the perimeter of the rectangle, i.e. $\mu(K)\le 4(1+w)\le 8$, while
the arc length of the part of $\partial K$ belonging to $K_\delta$ is trivially at least $4\delta$, i.e. $\mu(K_\delta)\ge 4\delta$.
This provides us with the admissible value $\theta=\de/2.$

Therefore, an application of Theorem \ref{theorem1} leads to
\begin{equation}\label{C1plus}
M_{n,q}(K) \le C_q(\delta) \frac{w^2}{d} n, \quad \textrm{where} \quad C_q(\delta):=C_q\left(\delta,\frac{\delta}2\right)=
\frac{121}{1-\delta}\left(1 +\frac{4}{\delta}  \right)^{1/q} \qquad (0<\de<1).
\end{equation}
 It remains to optimize on the choice of $\delta$ in function of $q$. So now we are looking for $\ds \min_{0<\de<1} \dfrac{1}{121} C_q(\de) =\min_{0<\de<1} \dfrac{(1+4/\de)^{1/q}}{1-\de}$. Denote
 $$\ff(\de):= \log \dfrac{(1+4/\de)^{1/q}}{1-\de} = \dfrac{1}{q}\log(1+4/\de)-\log(1-\de).$$
 It is easy to see that on $(0,1)$ this function is strictly convex, with $\ff(0)=\ff(1)=+\infty$ determining a $U$-shape form. We are looking for its unique minimum point. Differentiation results in
$\ff'(\de)= \dfrac{-4}{q} \dfrac{1}{\de^2+4\de} +\dfrac{1}{1-\de}$, and the unique root of this in $(0,1)$ can be obtained from the quadratic equation $\de^2+4\de=\dfrac{4}{q}(1-\de)$ or $\de^2+(4+4/q)\de-4/q=0$. This equation has two roots, one in the negative semiaxis and one in $(0,1)$: the latter is $\de_1:= \frac{2}{q} \left(Q-(q+1) \right)$, where $Q:=\sqrt{q^2+3q+1}$. This latter point is the unique minimum point for $\ff(\de)$, so that we get
\begin{align*}
\min_{0<\de<1} \frac{1}{121} C_q(\de) &=\min_{0<\de<1} \exp(\ff(\de)) = \exp(\ff(\de_1))= \frac{\{1+4/(\frac{2}{q}(Q-(q+1)))\}^{1/q}}{1-\frac{2}{q}(Q-(q+1))}
\\&= q \frac{\{1+2q\frac{Q+q+1}{Q^2-(q+1)^2}\}^{1/q}}{3q+2-2Q}
  =q (3q+2+2Q) \frac{\{1+2q\frac{Q+q+1}{q}\}^{1/q}}{(3q+2)^2-4Q^2}
\\&= q (3q+2+2Q) \frac{\{1+2(Q+q+1)\}^{1/q}}{5q^2}
= \frac{3q+2+2Q}{5q} (3+2q+2Q)^{1/q}.
\end{align*}
Substituting back into \eqref{C1plus} yields the stated inequality \eqref{C1}.
\end{proof}

Next, we consider the case when $\mu$ is the (restriction to $K$) of the 2-dimensional Lebesgue measure (area).

\begin{proof}[Proof of Corollary \ref{cor2}]
Again by Lemma~\ref{lemma1},  $\mu (K) \le \mu ([-1,1]\times [-iw, iw])= 4w.$

Taking into account the definition of the minimal width, we observe that there exist
$\alpha \in [0,1]$ and points $x^+$, $x^-\in [-1,1]$ such that the points $z^+=x^++i\alpha w$ and $z^-=x^--i(1-\alpha )w$ belong to $K$ (as otherwise the width of $K$ in the vertical direction would be less than $w$).
Write $K^+_\delta=\{x+iy\in K_\delta \colon  y\ge 0\},$ $K^-_\delta=\{x+iy\in K_\delta \colon  y\le 0\}.$

If $x^+\in[-\delta,\delta]$, then by convexity of $K$, the value of $\mu(K_\delta^+)$ is at least the area of the triangle with the vertices $-\delta$, $z^+$, $\delta$, which is equal to $\alpha w \delta$.
If $x^+\in[\delta,1]$, then $\mu(K_\delta^+)$ is not less then the area of the trapezium bounded by the real axis, the straight lines $x=\pm\delta$, and the straight line passing through the points $-1$ and $1+i\alpha w.$ The area of this  trapezium is also   $\alpha w \delta$. Similarly, if $x^+\in[-1,-\delta]$,   then $\mu(K_\delta^+)$ is not less then the area of the  trapezium bounded by the real  axis, the straight lines $x=\pm\delta$, and the straight line passing through the points $-1+i\alpha w$ and $1$.
Hence, in all cases, $\mu(K_\delta^+)\ge \alpha w \delta.$

Analogously, $\mu(K_\delta^-)\ge (1-\alpha) w \delta.$
Finally, $\mu(K_\delta) \ge w\delta$. Therefore, $\mu(K_\delta)/\mu(K)\ge \delta/4$.

Thus for any $0<\de<1$ we can set $\theta:=\delta/4$ as an admissible value for an application of Theorem~\ref{theorem1}. The theorem provides
\begin{equation}\label{C2plus}
M_{n,q}(K,\lambda) \le C_q^*(\delta) \frac{w^2}{d} n, \quad \textrm{where} \quad C_q^*(\delta):=C_q\left(\delta,\frac{\delta}4\right)
=\frac{121}{1-\delta}\left(1 +\frac{8}{\delta}  \right)^{1/q}.
\end{equation}
Now we consider $\ff(\de):=\log \frac{1}{121} C_q^*(\de)$, which is again a strictly convex, $U$-shaped function with endpoint values $\ff(0)=\ff(1)=+\infty$. To find the minimum, we compute the unique critical point satisfying $\ff'(\de)=0$. This equation can be written as
$$
\frac1q \frac{1}{1+8/\de}\cdot(-\frac{8}{\de^2})-\frac{1}{1-\de}\cdot(-1)=0 \quad \text{or} \quad q\de^2+(8q+8)\de-8=0.
$$
The quadratic equation has two roots, one in the negative semiaxes and another one in $(0,1)$: this latter one is $\de_1=\frac2q \left(Q-2q-2\right)$, where $Q:=\sqrt{4q^2+10q+4}$. This is the minimum point of $\ff(\de)$. Therefore,
\begin{align*}
\min_{0<\de<1} \frac{1}{121} C_q^*(\de) &=\min_{0<\de<1} \exp(\ff(\de)) = \exp(\ff(\de_1))= \frac{\{1+8/(\frac{2}{q}(Q-2q-2))\}^{1/q}}{1-\frac{2}{q}(Q-2q-2))}
\\&= q \frac{\left(1+\frac{4q}{Q-2q-2}\right)^{1/q}}{q-2Q+4q+4}
  =\frac{q(5q+4+2Q)}{(5q+4)^2-4Q^2} \left(1+ \frac{4q(Q+2q+2)}{Q^2-(2q+2)^2} \right)^{1/q}
%%%\\&= \frac{q(5q+4+2Q)}{(5q+4)^2-4Q^2} \left(1+
%%%\frac{4q(Q+2q+2)}{Q^2-4q^2-8q-4} \right)^{1/q}
\\&= \frac{q(5q+4+2Q)}{9q^2} \left(
1+\frac{4q(Q+2q+2)}{2q}\right)^{1/q}
= \frac{5q+4+2Q}{9q}
\left(4q+5+2Q\right)^{1/q}.
\end{align*}
Substitution in \eqref{C2plus} results in the stated assertion \eqref{C2}.
\end{proof}

\newcommand{\No}{N}

Our present work has dealt with the case of complex \emph{domains} with positive width $w>0$.
The partition of the set $K$ and the calculus depended on $w$ and are valid only for $n>d^2/w^2$, a~fatal condition if $w=0$. 

%%%\sz{Nevertheless, the polynomial $(1+z)^{n-k}(1-z)^k$ seems to be suitable to derive an upper estimate for the Tur\'an-Er\H{o}d constants in the case $w=0$, too.} 
As we mention in the introduction, in all known cases the \emph{lower estimates} for $w=0$ have $\sqrt{n}$ order. However, we are not aware of general \emph{upper estimations} which would ensure that the order is indeed $\sqrt{n}$. The most general we are aware of is the remark of Xiao and Zhou following \cite[p. 198, Theorem~1, Corollary 1]{Xiao} and pointing out that their estimate is of the right order for Jacobi weights. It seems likely that $O(\sqrt{n})$ upper estimates for $M_{n,q}(I,\mu)$ remain valid even for more general weights and measures.

%%%Our present work has dealt with the case of complex \emph{domains} with positive width $w>0$. It seems that the calculus would go through also assuming $w=0$, although at several places care need to be exercised as e.g. our calculus was valid only for $n>d^2/w^2$, a fatal condition if $w=0$. However, even if succeeding, this is not what one would like to have. Indeed, an $O(n)$ estimate for Tur\'an-Er\H{o}d constants seems to be an overshooting, given that for all known cases--mainly for the maximum norm--the right order was found to be $\sqrt{n}$. However, even if there are several results even for weighted $L^q$ norms on the interval, all with a lower estimation of order $\sqrt{n}$, we are not aware of general \emph{upper estimations} which would ensure that the order is indeed $\sqrt{n}$ (and not larger). The most general we are aware of is the remark of Xiao and Zhou following \cite[p. 198, Theorem 1, Corollary 1]{Xiao} and pointing out that their estimate is of the right order for Jacobi weights. It seems likely that $O(\sqrt{n})$ upper estimates for $M_{n,q}(I,\mu)$ remain valid even for more general weights and measures, not just for Jacobi weights.

\label{lastpage}

%\email{polina.glazyrina@urfu.ru}
%\email{yuly.goryacheva@yandex.ru}
%\address{Szil\'ard Gy. R\'ev\'esz \newline  \indent A. R\'enyi Institute of Mathematics \newline \indent Budapest, Re\'altanoda utca 13-15. \newline \indent 1053 HUNGARY} \email{revesz@renyi.hu}

{\it Acknowledgments.}
P. Yu. Glazyrina was supported by  the Ministry of Science and Higher Education of the Russian Federation (Ural Federal University Program of Development within the Priority-2030 Program).

Sz. Gy.~R\'{e}v\'{e}sz was supported in part by Hungarian National Research,
Development and Innovation Fund, project \# \# K-132097.

\bigskip

\noindent
\hspace*{5mm}
\begin{minipage}{\textwidth}
\noindent
\hspace*{-5mm}
Polina Yu.{} Glazyrina\\
Institute of Natural Sciences and Mathematics,\\ Ural Federal University,\\
 Mira street 19\\
 620002 Ekaterinburg, Russia\\
%% \href{mailto:revesz.szilard@renyi.hu}{revesz.szilard@renyi.hu}
\end{minipage}

\bigskip

\noindent
\hspace*{5mm}
\begin{minipage}{\textwidth}
\noindent
\hspace*{-5mm}
Yulia S.{} Goryacheva\\
Institute of Natural Sciences and Mathematics,\\ Ural Federal University,\\
 Mira street 19\\
 620002 Ekaterinburg, Russia\\
%% \href{mailto:revesz.szilard@renyi.hu}{revesz.szilard@renyi.hu}
\end{minipage}

\bigskip

\noindent
\hspace*{5mm}
\begin{minipage}{\textwidth}
\noindent
\hspace*{-5mm}
Szilárd Gy.{} Révész\\
 Alfréd Rényi Institute of Mathematics\\
 Reáltanoda utca 13-15\\
 1053 Budapest, Hungary \\
%% \href{mailto:revesz.szilard@renyi.hu}{revesz.szilard@renyi.hu}
\end{minipage}


\begin{thebibliography}{99}

\bibitem{Aziz} {A. Aziz, N. Ahemad},
Integral mean estimates for polynomials whose zeros are within a circle. \emph{Glas. Mat., III. Ser.} {\bf 31}, No. 2, 229--237 (1996).

\bibitem{Bogachev}
V. I. Bogachev, {\it Measure theory}, V.~1, Springer (2007).

\bibitem{Bonnesen} T. Bonnesen, W. Fenchel, {\it Theorie der konvexen K\"{o}rper}, Springer (1974) (in German). English translation: {\it Theory of convex bodies},  BCS Associates (1987).
%Berichtigter Reprint, Springer, Berlin etc. 1974 (in German); translate in Theory of convex bodies, Transl. from German and ed. by L. Boron, C. Christenson, B. Smith, with collab. of W. Fenchel, BCS Associates, Moscow, ID, 1987.

\bibitem{Erd} T. Erd\'{e}lyi, Tur\'{a}n-type reverse Markov inequalities for polynomials with restricted zeros, {\it  Constr. Approx.},  {\bf54}(2021), 35-48.

\bibitem{Erod} J. Er\H{o}d, Bizonyos polinomok maximum\'{a}nak als\'{o} korl\'{a}tj\'{a}r\'{o}l, {\it Mat. Fiz. Lapok.}, {\bf46}(1939), 58-82.
English translation:   On the lower bound of the maximum of certain polynomials, {\it East J. Approx.}, {\bf 12:4}(2006), 477-501.

\bibitem{GR1} P.~Yu. Glazyrina, Sz.~Gy. R\'{e}v\'{e}sz, Tur\'an type oscillation inequalities in $L^q$ norm on    the boundary of convex domains, {\it  Math. Ineq. Appl.}, {\bf20:1}(2017), 149-180.

\bibitem{GR2} P.~Yu. Glazyrina, Sz.~Gy. R\'{e}v\'{e}sz, Tur\'an type converse Markov inequalities in $L^q$ on a generalized  Er\H od class of convex domains,
 {\it J. Approx. Theory}, {\bf221}(2017), 62-76.

\bibitem{GR3} P.~Yu. Glazyrina, Sz.~Gy. R\'{e}v\'{e}sz, Tur\'{a}n-Er\H{o}d type converse Markov inequalities on general convex domains of the plane in the boundary $L^q$ norm,
{\it   Proc. Steklov Inst. Math.}, {\bf303}(2018), 78-104.		 	

\bibitem{Govil}
N. K. Govil, On the derivative of a polynomial, {\it Proc. Amer. Math. Soc.}, {\bf41(2)}(1973), 543-546.

\bibitem{Komarov}
M.~A. Komarov, Reverse Markov inequality on the unit interval for polynomials whose zeros lie in the upper unit half-disk, {\it Anal. Math.}, {\bf 45}(2019), 817-821.

%\bibitem{Komarov2022}M. A. Komarov, On the reverse Dzyadyk inequality for polynomials with zeros on a closed interval, {\it Math. Notes}, {\bf112:6}(2022), 1065–1070.

\bibitem{LP}
N. Levenberg, E. Poletsky, Reverse Markov inequalities, {\it  Ann. Acad. Fenn.}, {\bf 27}(2002),  173-182.

\bibitem{Malik}
M. A. Malik, An integral mean estimate for polynomials, {\it Proc. Amer. Math. Soc.},  {\bf91:2}(1984), 281-284.

\bibitem{Malik69}
M. A. Malik, On the derivative of a polynomial, {\it J. London Math. Soc.}, {\bf 2:1}(1969), 57-60.

\bibitem{SzR} Sz.~Gy. R\'{e}v\'{e}sz, Tur\'{a}n type reverse Markov inequalities for compact convex sets, {\it J. Approx. Theory}, {\bf141}(2006), 162-173.

\bibitem{Tur} P. Tur\'{a}n, \"{U}ber die Ableitung von Polynomen, {\it  Compositio Math.}, {\bf7}(1939), 89-95.

\bibitem{Underhill}
B. Underhill,  A. K. Varma,
An extension of some inequalities of P. Erd\H{o}s and P. Tur\'{a}n concerning algebraic polynomials having all real zeros, {\it Acta Math. Hung.}, {\bf 73:1-2}(1996), 1-28.


\bibitem{Varma1} A.~K. Varma, An analogue of some inequalities of P.~Tur\'{a}n concerning algebraic polynomials having all zeros inside $[-1, 1]$, {\it Proc. Amer. Math. Soc.},  {\bf55:2}(1976), 305-309.
%-- 1976. -- V.~55, \No~2. -- P.~305--309.

\bibitem{Varma2} A.~K. Varma,  An analogue of some inequalities of P.~Tur\'{a}n concerning algebraic polynomials having all zeros inside $[-1, 1]$. II, {\it Proc. Amer. Math. Soc.},  {\bf 69:1}(1978), 25-33.
 %  Proceedings of the American Mathematical Society. -- 1978. -- V.~69, \No~1. -- P.~25--33.

\bibitem{Wang}
J. L. Wang,  S. P. Zhou, The weighted Tur\'{a}n type inequality for generalized Jacobi weights, {\it Bull. Austral.
Math. Soc.}, {\bf66}(2002), 259-265.

\bibitem{Xiao} W. Xiao~W., S. Zhou, On weighted Tur\'{a}n type inequality, {\it Glasnik Matematicki}, {\bf34:54}(1999), 197-202.

\bibitem{Yu}
D. S. Yu, B. R. Wei, On Tur\'{a}n type inequality with doubling weights and $A^*$ weights, {\it J. Zhejiang Univ. Sci. A}, {\bf 6A(7)}, 2005, 764-768.

\bibitem{Chinese}
D. Zhao, S. Zhou, D. Yu and J. Wang, Weighted Tur\'an type inequality for rational functions with prescribed poles, \emph{Math. Ineq. Appl.}, {\bf 8:2}(2014), 251-265.

\end{thebibliography}
\end{document}